\theoremstyle{definition}
\newtheorem{proposition}{Proposition}
\tikzstyle{intermediate}=[circle,draw=black,fill=gray!20,thick,inner sep=0pt,minimum size=6mm]
\tikzstyle{od}=[circle,draw=black,fill=gray!70,thick, inner sep=0pt,minimum size=6mm]
\tikzstyle{intermediateaux}=[circle,draw=black,fill=gray!20,thick,inner sep=0pt,minimum size=9mm]
\tikzstyle{odaux}=[circle,draw=black,fill=gray!70,thick, inner sep=0pt,minimum size=9mm]
\tikzstyle{decision} = [diamond, draw, fill=blue!20, 
\tikzstyle{block} = [rectangle, draw, fill=blue!20, 
\tikzstyle{blockgreen} = [rectangle, draw, fill=red!20, 
\tikzstyle{blocksmall} = [rectangle, draw, fill=blue!20, 
\tikzstyle{line} = [draw, -latex']
\tikzstyle{cloud} = [draw, ellipse,fill=gray!20, node distance=5cm,
\newcommand{\rev}[1]{\textcolor{black}{#1}}
\newcolumntype{H}{>{\setbox0=\hbox\bgroup}c<{\egroup}@{}}
\begin{document}
\singlespacing
\numberwithin{equation}{section}

\title{\normalsize{13th AIMMS-MOPTA Optimization Modeling Competition}\\ 
\Large{An optimization and simulation method for the home service assignment, routing, and scheduling problem with stochastic travel time, service time, and cancellation}}

\author{Team \textit{The Optimistics} \\
Daniel Yam\'in, Daniel Barahona \\ Advisor: Alfaima L. Solano-Blanco}

\date{%
    \textit{Centro para la Optimizaci\'{o}n y Probabilidad Aplicada} (COPA), Departamento de Ingenier\'{\i}a Industrial\unskip, Universidad de los Andes, Bogot\'a, Colombia \\ %
    E-mail: \{\href{mailto:d.yamin@uniandes.edu.co}{\texttt{d.yamin}}, \href{mailto:ds.barahona@uniandes.edu.co}{\texttt{ds.barahona}}, \href{mailto:al.solano@uniandes.edu.co}{\texttt{al.solano}}\}@uniandes.edu.co
    %
    }


\maketitle

\begin{abstract}
\singlespacing\noindent
In the Home Service Assignment, Routing, and Appointment scheduling (H-SARA) problem, a set of homogeneous service teams must visit a set of customers. The home service provider needs to decide how many teams to hire (i.e., sizing problem), how to assign service teams to customers (i.e., assignment problem), how to route service teams (i.e., vehicle routing problem), and how to schedule the appointment times for the customers (i.e., appointment scheduling problem) such that the total cost is minimized. To tackle the H-SARA problem, we propose an efficient solution method that comprises two stages. In the first stage, we present a column generation algorithm to solve the sizing, assignment, and routing problem. The algorithm is enhanced by a high-quality initial solution which is found using the route-first cluster-second principle and a polynomial-time 2-approximation algorithm. In the second stage, due to the stochastic nature of travel time, service time, and cancellation, we propose a simulation-driven approach to decide the appointment times such that a desired on-time arrival probability is achieved. To ensure the suitability of the simulation model, we discuss the characterization of the stochastic parameters. The proposed ideas can be embedded in different solution schemes, including a fast heuristic method that finds good solutions within seconds or a more elaborate algorithm to find near-optimal solutions at the expense of longer computational time. At last, we provide a high-level flexible decision support tool implemented in AIMMS.
\noindent 
\end{abstract}

\bigskip 

\noindent\textbf{\emph{Keywords}:} home service, column generation, routing and scheduling problem, stochastic travel and service times, AIMMS.

\vfill

\thispagestyle{empty}

\pagebreak

\setcounter{page}{1}

\singlespacing




\section{Introduction}
Home services provide essential needs such as health care, beauty care, and banking services at the customers' homes. Due to several factors, including population aging, work obligations, and the outspread of chronic and infectious diseases, the demand for home services is expected to increase rapidly in the near future. In 2016, there were about 65,600 regulated, long-term care services providers in the United States which served more than 8.3 million people (\cite{harris2016long}). In most European countries, between 1\% and 5\% of the public budget is assigned to home health care services (\cite{genet2012home}). In a broader perspective, the worth of the global home service industry was estimated at \$282 billion and is expected to reach \$1,133.4 billion by 2026 (\cite{MarketResearch}). In a competitive market, lowering public expenditures, increasing service quality, and decreasing operational costs becomes a critical activity for home services providers (\cite{fikar2017home}). In light of the discussion above, the development of highly efficient computational models to support decision-making in the home service industry has become increasingly important.

Home services require professional service teams to travel between geographically distributed customers. Further, each served customer has an appointment time, referred as to a planned service start time. Therefore, in their day-to-day planning, home service providers need to address the following critical operational decisions: how many teams to hire (i.e., sizing problem), how to assign service teams to customers (i.e., assignment problem), how to route service teams (i.e., vehicle routing problem), and how to schedule the appointment times for the customers (i.e., appointment scheduling problem) such that their total operational cost is minimized. The above problem can be framed as the \emph{Home Service Assignment, Routing, and Appointment scheduling} (H-SARA) problem. Three random factors are considered to capture the stochastic nature of real world: travel time, service duration, and customer cancellation. Regardless of the high complexity that arises from various challenging optimization problems, the planning of home services is often performed manually (\cite{eveborn2006laps}). 

The H-SARA problem --and in a more general sense, the routing and scheduling of home care services-- is a major stream of research with different objectives and particular considerations (\cite{fikar2017home}). An overview of logistics management problems in the field of home services can be found in \textcite{gutierrez2013home}. From the operations research (OR) perspective, \textcite{milburn2012operations} describes tactical and operational planning problems arising in home health care. Also, and considering that the H-SARA problem is closely related to the vehicle routing problem (VRP), \textcite{toth2014vehicle} present many variants of the VRP and the most efficient solution methods to tackle them. Concretely in the routing and scheduling of home services, most solution methods rely on metaheuristics (\cite{akjiratikarl2007pso,mankowska2014home,braekers2016bi}), heuristics and approximation algorithms (\cite{eveborn2006laps,hindle2009travel}), and few exact optimization techniques (\cite{rasmussen2012home}). Despite that real-world operations are subject to uncertainty, most articles consider static information. 

\rev{More recently, the home service problem with underlying uncertainty has gained the attention of researchers. For instance, \textcite{chen2017tackling} tackle the problem by formulating an integer program with chance constraints to cope with uncertainty in durations. Also, \textcite{cappanera2021addressing} address the consistency and demand uncertainty in the home care planning problem. We refer to \textcite{zhan2021home} for a complete literature review on the home service routing and appointment scheduling with stochastic times. We highlight that most solution methods are either very complex or unable to find rapidly high-quality solutions for large-scale, real-world applications.}

In this study, we tackle the H-SARA problem by proposing an end-to-end methodology that solves the problem in a reasonable time and considers the stochastic nature of travel time, service time, and cancellation. The solution scheme comprises a column generation-based heuristic for the sizing, assignment, and routing (SAR) problem and a simulation model for the scheduling problem. To find an initial solution in the column generation scheme, we extend the route-first cluster-second split mechanism proposed by \textcite{prins2004simple}. To better control the simulation, we provide a framework to characterize stochastic travel time, service time, and cancellation. Finally, we embed the solution method into an AIMMS-based user-friendly application that serves as a high-level decision-support tool for home service providers.

\section{Problem definition} \label{s: problem-definition}

Mathematically, we can formulate the H-SARA problem as follows. Let $\mathcal{G} = (\mathcal{V},\mathcal{A})$ be a directed graph in which $\mathcal{V} = \{0,1,\ldots, n, n+1\}$ is the set of nodes, $\mathcal{N} = \{1,\ldots, n\} \subseteq \mathcal{V}$ is the set customers that needs to be serviced, and $\mathcal{A} = \{(i,j)\mid (i=0 \wedge j \in \mathcal{N}) \vee (i \in \mathcal{N} \wedge j \in \mathcal{N} \wedge i \ne j) \vee (i \in \mathcal{N} \wedge j = n+1)\}$ is the set of arcs. The depot is represented by two nodes, namely, $0$ and $n+1$. Each customer $i\in \mathcal{N}$ has a random service time $s_{i}$ and may cancel his/her appointment on the day of service. \rev{We denote by $p_{i}$ the probability associated with customer $i\in \mathcal{N}$ canceling its appointment.} Also, each arc $(i,j) \in \mathcal{A}$ has a random travel time $t_{ij}$. 

To service customers there is a set $\mathcal{M}$ of homogeneous service teams. We assume that $\mathcal{M}$ is a sufficiently large set since home service providers usually can hire third-party services. Each hired team $k \in \mathcal{M}$ departs from the depot (node $0$), services some customers, and returns to the depot (node $n+1$). Moreover, each customer is scheduled a service start time and the hired teams leave the depot at time $0$ and must return before time $L$ (\emph{a priori}). Given the stochastic nature of travel time, service time, and cancellations, we consider the following scenarios. First, the service team arrives at the customer's location before the scheduled service time and must wait (i.e., they remain idle until the scheduled start time). Second, the service team arrives at the customer's location after the scheduled start time and therefore the customer waits. Third, a service team needs additional time to finish serving all scheduled customers beyond $L$ and thus incurs in overtime. For the purpose of the present study, we assume that travel times satisfy the triangle inequality. Also, the probability distributions of travel time, service time, and cancellations are known.

There are several costs associated with the day-to-day activities of home service providers. First, a fixed cost $c_{f}$ is charged for each hired team. Second, $c_{t}$ and $c_{o}$ are the costs that the home service provider pays for one unit of travel time and for one unit of overtime, respectively. Finally, $c_{e}$ and $c_{d}$ can be seen as the cost that the company pays for being one time unit early and for being one time unit late while visiting a customer, respectively. These costs penalize early and late services.

We define a mathematical formulation for the H-SARA along the lines of  \textcite{toth2014vehicle, tacs2013vehicle}. Since travel and service times are random variables, we use their expected value. Henceforth, $t_{ij}$ denotes the expected travel time along arc $(i,j) \in \mathcal{A}$, whereas $s_{i}$ denotes the expected service time when visiting customer $i \in \mathcal{N}$. Note that using expected values is convenient by the linearity of expectation. Note too that the expected utility is the most straightforward metric to evaluate decision-making under uncertainty (\cite{li2016finding}). \rev{Alternatively, to add robustness to the optimization model, one could calculate a high percentile, say $95$, of the distribution of travel and service times, and use those values rather than the expected value.} Regarding variables, for each arc $(i,j)\in \mathcal{A}$ and each team $k \in \mathcal{M}$, let $x_{ij}^{k}$ be the binary flow variable indicating whether service team $k$ travels directly from node $i$ to node $j$ or not. To define the scheduling of the appointments, for each node $i\in \mathcal{V}$ and each team $k \in \mathcal{M}$, let $w_{i}^{k}$ be the expected time at which team $k$ begins service at node $i$. To incorporate the overtime cost, we define an auxiliary variable $\Delta^{k}$ for each team $k\in \mathcal{M}$. These variables capture the working time beyond time horizon $L$. For a vehicle $k\in \mathcal{M}$, given the values of $\boldsymbol{x}^{k} = \{x_{ij}^{k}\mid (i,j) \in \mathcal{A}\}$ and $\boldsymbol{w}^{k} = \{w_{i}^{k}\mid i \in \mathcal{N}\}$, we can compute the total expected earliness $E(\boldsymbol{x}^{k},\boldsymbol{w}^{k})$ and the total expected delay $D(\boldsymbol{x}^{k},\boldsymbol{w}^{k})$. To capture the expected travel time cost and the fixed hiring cost, for each arc $(i,j) \in \mathcal{A}$ we define
\begin{align*}
    c_{ij} = & \begin{cases}
    c_{t}\, t_{ij} + c_{f}, & i = 0; \\
    c_{t}\, t_{ij}, & i \in \mathcal{N}. 
    \end{cases}
\end{align*}
Finally, to model when a service team is idle (not being used), we add a fictitious arc between node 0 and node $n+1$ with a cost of zero. The mathematical program \eqref{1}--\eqref{9} describes the H-SARA problem.

\begin{align}
        \min\quad \sum_{k\in \mathcal{M}} \sum_{(i,j)\in \mathcal{A}} c_{ij}x_{ij}^{k} + \sum_{k \in \mathcal{M}} c_{o}\Delta^{k} + \sum_{k \in \mathcal{M}} c_{e}E(\boldsymbol{x}^{k},\boldsymbol{w}^{k}) + \sum_{k \in \mathcal{M}} c_{d} D(\boldsymbol{x}^{k},\boldsymbol{w}^{k}) \label{1}
\end{align}
\vspace{-1.2 cm}

\begin{align}
    \text{s.t., \,\,\,\, \quad \quad \quad \quad \quad \quad \quad \quad \quad}& \nonumber\\
    \sum_{k \in \mathcal{M}} \sum_{j \in \mathcal{N} \cup \{n+1\}}x_{ij}^{k} & = 1, & &    \forall i \in \mathcal{N} ;\label{2}\\
    \sum_{j \in \mathcal{N} \cup \{n+1\}}x_{0,j}^{k} & = 1, & & \forall k \in \mathcal{M}; \label{3} \\
    \sum_{i \in \mathcal{N} \cup \{0\}}x_{ij}^{k} - \sum_{i\in \mathcal{N} \cup \{n+1\}} x_{ji}^{k} & = 0, & & \forall k \in \mathcal{M}, j \in \mathcal{N}; \label{4}\\
    \sum_{i \in \mathcal{N} \cup \{0\}}x_{i,n+1}^{k} & = 1, & & \forall k \in \mathcal{M}; \label{5}\\
    w_{i}^{k} + (1-p_{i}) \times (s_{i} + t_{ij}) - w_{j}^{k} & \leq (1-x_{ij}^{k})|\mathcal{M}|, & & \forall k\in \mathcal{M}, (i,j)\in \mathcal{A} ;\label{6}\\
    w_{n+1}^{k} & \leq L + \Delta^{k}, & & \forall k \in \mathcal{M} ;\label{7}\\ 
    w_{i}^{k} & \geq 0, & & \forall k\in \mathcal{M}, i \in \mathcal{V}; \label{8}\\
    \Delta^{k} & \geq 0, & & \forall k\in \mathcal{M} ;\label{8.1}\\
    x_{ij}^{k}& \in \{0,1\}, & & \forall k \in \mathcal{M}, (i,j) \in \mathcal{A}.\label{9}
\end{align}

The objective function \eqref{1} minimizes the total cost, including the hiring cost and the expected travel time, overtime, earliness, and delay costs. Constraints \eqref{2} impose that each customer is served exactly once. Constraints \eqref{3}--\eqref{5} ensure that the route of each service team starts from the origin and ends at the destination (i.e., characterize a multi-commodity flow structure). \rev{Constraints \eqref{6} guarantee the consistency of the time variables. Note that constraints \eqref{6} consider customers cancellations and appointments are scheduled accordingly (see \S \ref{ss:simulation-model} for details)}. Soft constraints \eqref{7} ensure that the service teams return to the depot before $L$, or else they incur in overtime. Finally, constraints \eqref{8}, \eqref{8.1}, and \eqref{9} impose the non-negativity and binary requirements.

\section{Solution method} \label{s: solution-method}

The mathematical formulation \eqref{1}--\eqref{9} captures the key elements of the H-SARA problem. Given that the problem combines several difficult problems --and the routing problem alone is NP-Hard (\cite{garey1979computers})--, we establish that the H-SARA is also a NP-Hard problem, making it difficult to solve exactly. In this light, we decompose the problem to develop a two-stage solution method. Figure \ref{fig:diagram} presents an overview of our solution scheme. In the first stage, we use a column generation algorithm for the SAR problem. The algorithm builds from a set covering model and a high-quality initial solution which is found by extending the route-first cluster-second principle to the H-SARA problem. Subsequently, we iteratively solve a subproblem to find new promising routes for the service teams until the optimality conditions are met. In the second stage, we solve the appointment scheduling problem using a simulation-driven approach which is able to ensure the reliability of the solution. To guarantee the suitability of the simulation approach, we discuss the characterization of the stochastic parameters.

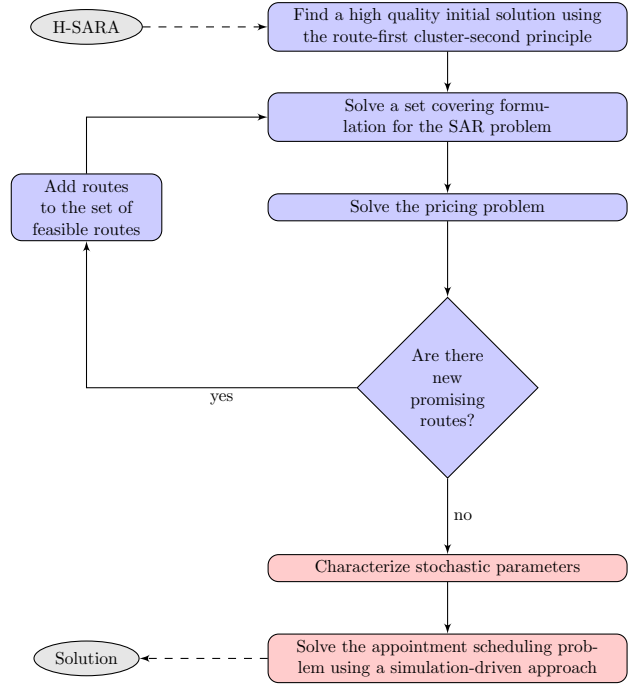
\begin{figure}[ht]
    \centering
    \begin{tikzpicture}[node distance = 2cm, auto, scale = 0.60, every node/.style={scale=0.60}]
        \node [block] (init) {Find a high quality initial solution using the route-first cluster-second principle};
        \node [cloud, left of=init, node distance=8cm] (expert) {H-SARA};
        \node [block, below of=init] (identify) {Solve a set covering formulation for the SAR problem};
        \node [block, below of=identify] (evaluate) {Solve the pricing problem};
        \node [blocksmall, left of=evaluate, node distance=8cm] (update) {Add routes to the set of feasible routes};
        \node [decision, below of=evaluate] (decide) {Are there new promising routes?};
        \node [blockgreen, below of=decide, node distance=4cm] (stop0) {Characterize stochastic parameters};
        \node [blockgreen, below of=stop0] (stop) {Solve the appointment scheduling problem using a simulation-driven approach};
        \node [cloud, left of=stop, node distance=8cm] (exit) {Solution};
        \path [line] (init) -- (identify);
        \path [line] (identify) -- (evaluate);
        \path [line] (evaluate) -- (decide);
        \path [line] (decide) -| node [near start] {yes} (update);
        \path [line] (update) |- (identify);
        \path [line] (decide) -- node {no}(stop0);
        \path [line] (stop0) -- (stop);
        \path [line,dashed] (expert) -- (init);
        \path [line,dashed] (stop) -- (exit);
    \end{tikzpicture}
    \vspace*{5mm}
    \caption{Solution method for the H-SARA problem with the first stage highlighted in blue and the second stage highlighted in red}
    \label{fig:diagram}
\end{figure}

The remainder of this paper is organized as follows. Section \ref{s: column-generation} explains the column generation algorithm for the SAR problem. Section \ref{s: appointment} presents the simulation-based method for the appointment scheduling problem. Section \ref{s: computational-experiments} shows the computational experiments. Finally, Section \ref{s: conclusion} concludes the paper and outlines future work. For the sake of the readability of the paper, we summarize the notation of the problem statement and solution method in Table \ref{NotationTable}.

\section{Column generation algorithm for the sizing, assignment, and routing problem} \label{s: column-generation}
In this section, we tackle the sizing, assignment, and routing problem using a column generation algorithm. In particular, with a column generation-based heuristic, we find the number of teams to hire, the assignment of customers to teams, and the routes that each team follows such that each customer is served and the fixed hiring cost, the expected travel time cost, and the expected overtime cost are minimized. We leave the appointment scheduling problem --and its associated costs of earliness and delay-- for Section \ref{s: appointment}. 

To build the column generation scheme, we first define a set covering formulation to build a column generation algorithm (\S \ref{ss:set covering}). Subsequently, we propose an efficient method to rapidly find a high-quality initial feasible solution for the problem mentioned above by applying the route-first cluster-second principle (\S \ref{ss: initial-solution}). Next, we present a mathematical programming-based solution method as well as an heuristic approach to solve the corresponding \emph{pricing problem} (\S \ref{ss:pricing}). Finally, we embed all these ideas into a column generation-based heuristic that solves the SAR problem in practical time for real-world applications (\S \ref{ss: cg-heuristic}).

\subsection{Set covering formulation and column generation} \label{ss:set covering}
By inspecting the mathematical formulation of the H-SARA problem, we see that the set of constraints \eqref{3}--\eqref{9} as well as the objective function \eqref{1} are separable for each service team $k\in \mathcal{M}$. By applying Dantzig-Wolfe decomposition, we can derive an equivalent set covering formulation of the problem (\cite{feillet2010tutorial}). 

Let $\Omega$ be the set of feasible routes satisfying constraints \eqref{3}--\eqref{5}. Note that each route represents a corresponding service team; that is, routes and service teams are equivalent. For each route $r \in \Omega$, let $c_{r}$ be the cost of the route (including the hiring, travel time, and overtime costs), $a_{ir}$ be a binary parameter indicating whether the route visits node $i \in \mathcal{N}$ or not, and $b_{ijr}$ be a binary parameter indicating whether the route uses arc $(i,j)\in \mathcal{A}$ or not. Finally, let $y_{r}$ be a binary variable which indicates if route $r \in \Omega$ is used in the solution or not. The SAR problem can be described by the integer program \eqref{11}--\eqref{13}.

\begin{align}
    \min\quad \sum_{r \in \Omega} c_{r} y_{r} & \label{11}\\
    \text{s.t.,\,\,\,\quad \quad \quad \quad}& \nonumber\\
    \sum_{r \in \Omega}a_{ir}y_{r} & \geq 1, & &    \forall i \in \mathcal{N}; \label{12}\\
    y_{r} & \in \mathbb{Z}_{+}^{1}, & & \forall r \in \Omega. \label{13}
\end{align}

The objective \eqref{11} minimizes the sum of the hiring cost, the expected travel time cost, and the expected overtime cost. Constraints \eqref{12} correspond to constraints \eqref{2} and guarantee that each customer is visited by a route. Finally, constraints \eqref{13} impose the integer nature of the variables. Unfortunately, the size of the set $\Omega$ grows exponentially with the number of nodes, making the set covering model intractable with a standard branch-and-bound algorithm. Thus, we tackle the problem with a column generation scheme. 

A lower (i.e., dual) bound on the optimal objective of the set covering model \eqref{11}--\eqref{13} can be obtained by dropping the integrality requirements \eqref{13} and replacing them with non-negative constraints. Then, the resulting linear programming (LP) relaxation is iteratively solved considering only a subset $\Omega \text{\textquotesingle} \subseteq \Omega$ of routes. For each covering constraint in \eqref{12} we have a corresponding dual variable $\pi_i \geq 0$ associated with customer $i\in \mathcal{N}$. Keeping a set partitioning formulation for constraints \eqref{12} --rather than the set covering formulation we picked-- would lead to free dual variables $\pi_i$, which typically slows down the convergence of the column generation algorithm (\cite{feillet2010tutorial}). At every iteration, a promising route --that is, one with negative \emph{reduced cost}-- can be found by solving a pricing problem as established by Proposition \ref{prop: pricing}.

\begin{proposition} 
For a given route $r\in \Omega$, the pricing problem is defined by the following integer program:

\begin{align}
    \min \quad \sum_{(i,j)\in \mathcal{A}}r_{ij}\,b_{ijr} +c_{o}\Delta^{r}  \label{FO_pricing} \\
    \text{s.t.,\,\;\, \quad\quad\quad\quad\quad\quad \quad \quad \quad}& \nonumber\\
    \sum_{j\mid (i,j)\in \mathcal{A}}b_{ijr} -\sum_{j\mid (j,i)\in \mathcal{A}}b_{jir} & = \begin{cases}
    1, & i = 0\\
    -1, & i = n+1 \\
    0, & \text{else}
    \end{cases}, & &    \forall i \in \mathcal{V} ; \label{balance}\\
    \sum_{j\mid (i,j)\in \mathcal{A}}b_{ijr} & \leq 1, & & \forall i \in \mathcal{V}; \label{capacity}\\
    \sum_{(i,j)\in \mathcal{A}} \tilde{t}_{ij}\, b_{ijr} &  \leq L + \Delta^{r}; \label{timelimit}\\
    \Delta^{r} & \geq 0; & &  \label{nature-pricing-delta}\\
    b_{ijr} & \in \{0,1\}, & & \forall (i,j)\in \mathcal{A}, \label{nature-pricing}
\end{align}
where $r_{ij}$ is defined as
\begin{align}
r_{ij} = \begin{cases}
    c_{f} + c_{t}\,t_{ij}, & i = 0;\\
    c_{t}\,t_{ij}-\pi_{i}, & i \in \mathcal{N},
    \end{cases} \label{reducedCost}
\end{align} 
and $\tilde{t}_{ij}$ is given by
\begin{align}
\tilde{t}_{ij} = \begin{cases}
    t_{ij}, & i = 0;\\
    t_{ij} + s_{i}, & i \in \mathcal{N},
    \end{cases} \label{time}
\end{align}

\noindent for ease of notation. Objective function \eqref{FO_pricing} minimizes the reduced cost. The \emph{mass balance constraints} \eqref{balance} guarantee that the outflow minus the inflow equals the supply/demand of the node. Constraints \eqref{capacity} ensure that the outgoing degree of each node is at most one. Constraints \eqref{timelimit} impose that the total expected time of the route (including travel times and service times) do not exceed the time limit $L$, or if it does, then $\Delta^{r}$ captures the overtime. Finally, constraints \eqref{nature-pricing-delta} and \eqref{nature-pricing} define the nature of the decision variables.

\label{prop: pricing}
\end{proposition}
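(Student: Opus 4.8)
The plan is to derive the reduced cost of a column in the restricted master problem and then recognize that minimizing it over all feasible routes is precisely the integer program \eqref{FO_pricing}--\eqref{nature-pricing}; this is the standard Dantzig--Wolfe pricing computation specialized to the set covering model. First I would take the LP relaxation of \eqref{11}--\eqref{13}, obtained by replacing \eqref{13} with $y_r \geq 0$, and associate the multiplier $\pi_i \geq 0$ with each covering constraint in \eqref{12}. The reduced cost of a route $r \in \Omega$ is then $\bar c_r = c_r - \sum_{i \in \N} \pi_i a_{ir}$, and a route is promising exactly when $\bar c_r < 0$. Hence the separation oracle must minimize $\bar c_r$ over $r \in \Omega$, and the proposition claims this minimization coincides with the stated integer program.

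The core step is to rewrite $\bar c_r$ arc-by-arc in the variables $b_{ijr}$. I would use three identities valid for any $r \in \Omega$: the route cost decomposes as $c_r = c_f + c_t \sum_{(i,j)\in\A} t_{ij} b_{ijr} + c_o \Delta^r$; the fixed hiring cost attaches to the unique arc leaving the depot, since $\sum_{j:(0,j)\in\A} b_{0jr} = 1$ gives $c_f = c_f \sum_{j} b_{0jr}$; and a customer $i \in \N$ is visited iff it has an outgoing arc, so $a_{ir} = \sum_{j:(i,j)\in\A} b_{ijr}$. Substituting and collecting coefficients by arc produces $c_f + c_t t_{0j}$ for arcs leaving the depot and $c_t t_{ij} - \pi_i$ for arcs leaving a customer, which are exactly the entries $r_{ij}$ in \eqref{reducedCost}, while the term $c_o \Delta^r$ carries over unchanged. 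This yields $\bar c_r = \sum_{(i,j)\in\A} r_{ij} b_{ijr} + c_o \Delta^r$, matching \eqref{FO_pricing}.

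It then remains to verify that \eqref{balance}--\eqref{nature-pricing} describe exactly the feasible routes together with their overtime. The mass-balance constraints \eqref{balance}, with supply $+1$ at node $0$ and demand $-1$ at node $n+1$, route a unit of flow from source to sink; combined with the out-degree bound \eqref{capacity}, every visited node is left at most once, so the flow support is a simple depot-to-depot path, i.e. a route in $\Omega$. Because $\tilde t_{ij}$ in \eqref{time} loads the service time $s_i$ onto the arcs leaving customer $i$, the left-hand side of \eqref{timelimit} equals the total expected travel-plus-service time of the route; and since $c_o > 0$ with $\Delta^r$ otherwise free to vanish, any optimal solution sets $\Delta^r = \max\{0, \sum_{(i,j)\in\A} \tilde t_{ij} b_{ijr} - L\}$, so \eqref{timelimit} and \eqref{nature-pricing-delta} correctly price the overtime.

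The step I expect to be the main obstacle is this last correspondence: the flow-balance and out-degree constraints admit, besides the genuine $0$-to-$(n+1)$ path, disconnected customer cycles on which in- and out-degree are both one and net flow is zero. Such a cycle is feasible for \eqref{balance}--\eqref{nature-pricing} yet is not a route in $\Omega$, and whenever its contribution $\sum (c_t t_{ij} - \pi_i)$ is negative the minimizer is tempted to include it, spuriously ``covering'' customers it never reaches. I would therefore argue either that the cost structure forbids profitable subtours---exploiting $\pi_i \geq 0$ together with positivity of $c_t$ and the triangle inequality---or, failing that, that subtour-elimination must be enforced so the integer optimum is an elementary path. Closing this gap is precisely what makes the equivalence with the pricing oracle nontrivial rather than a direct term-matching.
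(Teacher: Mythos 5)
Your derivation of the objective is the same as the paper's: both start from $\overline{c}_{r} = c_{r} - \sum_{i\in\mathcal{N}}\pi_{i}a_{ir}$, attach the fixed cost $c_{f}$ to the unique arc leaving the depot via $\sum_{j\mid(0,j)\in\mathcal{A}}b_{0jr}=1$, use $a_{ir}=\sum_{j\mid(i,j)\in\mathcal{A}}b_{ijr}$ to fold the duals into the arc weights, and collect coefficients to obtain $r_{ij}$ as in \eqref{reducedCost}. The paper's proof in fact stops there --- it only establishes that the objective \eqref{FO_pricing} expresses the reduced cost, and does not verify the constraint correspondence at all --- so your additional checks of \eqref{balance}--\eqref{nature-pricing} and of the overtime term go beyond what the paper writes down.

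The ``main obstacle'' you flag is real, and you should know how it resolves. Your first proposed fix (arguing that $\pi_{i}\geq 0$, $c_{t}>0$, and the triangle inequality forbid profitable subtours) does not work: since $r_{ij}=c_{t}t_{ij}-\pi_{i}$ can be arbitrarily negative for large duals, negative-cost customer cycles are entirely possible, and the paper says so explicitly at the start of \S\ref{ss:pricing} (``the costs $r_{ij}\in\mathbb{R}$ might induce negative cycles, the system of inequalities \eqref{balance}--\eqref{nature-pricing} becomes insufficient to guarantee the elementarity of the path''). The paper's resolution is your second option: it treats the program in Proposition~\ref{prop: pricing} as a relaxation and dynamically adds the generalized cutset inequalities \eqref{GCS} in a cutting-plane loop (Algorithm~\ref{IP-subproblem}) until the solution is an elementary path. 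So the proposition as stated is best read as defining the base formulation of the pricing problem rather than a complete characterization of $\Omega$; your instinct that the equivalence is not closed by term-matching alone is correct, and the closure happens outside the proof, in the solution procedure.
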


\begin{proof}
Each feasible route must satisfy constraints \eqref{3}--\eqref{9}. Further, the reduced cost of a route $r$, denoted by $\overline{c}_{r}$, can be calculated as
\begin{align}
        \overline{c}_{r} & = c_{r} - \sum_{i \in \mathcal{N}}\pi_i\, a_{ir} \nonumber\\
        & = c_{f} + \sum_{(i,j)\in \mathcal{A}} c_{t}\, t_{ij}\, b_{ijr} + c_{o}\Delta^{r} - \sum_{i \in \mathcal{N}}\pi_i \,a_{ir} \label{auxiliar-reduced-cost}
\end{align}
Equality \eqref{auxiliar-reduced-cost} holds since we are only considering the hiring cost, the expected travel time cost, and the expected overtime cost. Since any feasible route starts at $0$, we can rewrite $\overline{c}_{r}$ as
\begin{align*}
        \overline{c}_{r} & = \sum_{j\mid (0,j)\in \mathcal{A}}c_{f}\,b_{ijr} + \sum_{(i,j)\in \mathcal{A}} c_{t}\, t_{ij}\, b_{ijr} + c_{o}\Delta^{r} - \sum_{i \in \mathcal{N}}\pi_i \,a_{ir} \\
        & = \sum_{(i,j) \in \mathcal{A} \mid i = 0}(c_{f} + c_{t}\, t_{ij})b_{ijr} + \sum_{(i,j)\in \mathcal{A} \mid i \in \mathcal{N}}(c_{t}\, t_{ij} - \pi_{i})b_{ijr} + c_{o}\Delta^{r}
\end{align*}
The above equation can be summarized by equation \eqref{FO_pricing}, where the weight for each arc is given by $r_{ij}$ as defined by equation \eqref{reducedCost}. 

\end{proof}

The problem defined by objective \eqref{FO_pricing} and constraints \eqref{balance}--\eqref{nature-pricing} is an \emph{Elementary Shortest Path Problem} with an additional soft time limit constraint.

\subsection{High-quality initial solution} \label{ss: initial-solution}
To start the column generation algorithm, we need an initial set of routes to derive an initial set of columns. On the one hand, having a high-quality initial solution usually accelerates the convergence of the column generation algorithm. On the other hand, the procedure to find the initial solution should be computationally fast. Therefore, in this study, we adapt the route-first cluster-second principle proposed by \textcite{prins2004simple} to the H-SARA problem. The route-first cluster-second split procedure for routing problems comprises two steps: (i) constructs a \emph{giant tour} --that is, solve a Traveling Salesman Problem (TSP)--; and (ii) split the giant tour into feasible trips.

We first need to find a Hamiltonian cycle, referring to a tour that visits all nodes in $\mathcal{N}$ exactly once. Unfortunately, finding such cycle is a NP-Complete problem and therefore difficult to solve exactly (\cite{garey1979computers}). Consequently, we use a polynomial-time 2-approximation algorithm to find such a cycle. Algorithm \ref{TSP-Approx} presents an overview of the approximation algorithm for the TSP (\cite{cormen2009introduction}). Given an undirected graph $\mathcal{G}^{t}$, the algorithm computes the minimum spanning tree $\mathcal{T}$ from root node $r$ using Prim's polynomial-time algorithm. Then, the resulting tree $\mathcal{T}$ is traversed in preorder and the nodes are listed in $\mathcal{H}$ according to when they are first visited. Finally, the algorithm returns $\mathcal{H}$, which is a Hamiltonian cycle. Proposition \ref{prop-TSP-Approx} establishes that Algorithm \ref{TSP-Approx} has a polynomial running time in the input size and also has an approximation ratio of 2 meaning that the cost of the solution returned is within a factor of 2 of the cost of an optimal tour.

{\singlespacing
\centering
\begin{algorithm}[H]
\footnotesize
\DontPrintSemicolon
  \KwInput{$\mathcal{G}^{t} = (\mathcal{V}^{t},\mathcal{A}^{t})$, undirected graph; $r$, root node.}
  \KwOutput{$\mathcal{H}$, Hamiltonian cycle.} 
  \text{compute a minimum spanning tree $\mathcal{T}$ for $\mathcal{G}^{t}$ from root node $r$ using Prim's algorithm}\\
  \text{define $\mathcal{H}$ as the list of vertices ordered according to when they are first visited in a preorder tree walk of $\mathcal{T}$}
  \Return{$\mathcal{H}$}
  \caption{Approximation algorithm for the TSP} 
  \label{TSP-Approx}
\end{algorithm}}

\begin{proposition} 
Algorithm \ref{TSP-Approx} is a polynomial-time 2-approximation algorithm for the TSP with the triangle inequality.
\label{prop-TSP-Approx}
\end{proposition}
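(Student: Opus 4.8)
The plan is to verify the two assertions of the proposition in turn: that the procedure runs in polynomial time, and that the tour it returns costs at most twice an optimal tour. For the running time, I would simply account for the two operations the algorithm performs. Computing the minimum spanning tree $\mathcal{T}$ of $\mathcal{G}^{t}$ with Prim's algorithm is polynomial in $|\mathcal{V}^{t}|$ (for instance $O(|\mathcal{V}^{t}|^{2})$ with an array-based priority queue), and the preorder tree walk that produces the ordered list $\mathcal{H}$ touches each vertex and each edge of $\mathcal{T}$ once, so it is linear in $|\mathcal{V}^{t}|$. Their sum is polynomial, settling the first claim.

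For the approximation guarantee, let $c(\cdot)$ denote the total weight of a subgraph or a walk, with the underlying edge weights being the (symmetric) travel times, which by assumption satisfy the triangle inequality, and let $\mathcal{H}^{*}$ be an optimal tour. The argument chains three inequalities. First I would bound the tree against the optimum: deleting any one edge of $\mathcal{H}^{*}$ leaves a spanning tree of $\mathcal{G}^{t}$, and since the weights are nonnegative and $\mathcal{T}$ is a \emph{minimum} spanning tree, $c(\mathcal{T}) \leq c(\mathcal{H}^{*})$. Second I would relate $\mathcal{T}$ to a full walk $W$ that follows the depth-first traversal of $\mathcal{T}$, descending into and backtracking out of every subtree; this walk crosses each edge of $\mathcal{T}$ exactly twice, so $c(W) = 2\,c(\mathcal{T})$.

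The third step is the one I expect to be the crux. The walk $W$ revisits vertices, whereas $\mathcal{H}$ lists each vertex exactly once in order of first visit and then closes the cycle. I would obtain $\mathcal{H}$ from $W$ by repeated \emph{shortcutting}: whenever $W$ is about to return to an already-listed vertex, replace the detour by a direct edge to the next not-yet-listed vertex. Each such replacement substitutes a two-edge path $a \to b \to c$ through an already-visited $b$ by the single edge $a \to c$, and the triangle inequality $c(a,c) \leq c(a,b)+c(b,c)$ guarantees the cost does not increase; applying this finitely often yields $c(\mathcal{H}) \leq c(W)$. Combining the three inequalities gives $c(\mathcal{H}) \leq c(W) = 2\,c(\mathcal{T}) \leq 2\,c(\mathcal{H}^{*})$, the claimed factor of $2$. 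The main obstacle is to make the shortcutting precise: one must argue that the successive shortcuts can be performed so that the surviving vertex order is exactly the preorder list $\mathcal{H}$ produced by the algorithm, and that each shortcut is charged to a single, valid application of the triangle inequality.
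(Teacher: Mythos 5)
Your argument is correct and is precisely the proof of Theorem 35.2 in Cormen et al., which is all the paper offers for this proposition (its "proof" is just the citation). The three-inequality chain --- minimum spanning tree bounded by the optimal tour, full walk equal to twice the tree, and preorder shortcutting justified by the triangle inequality --- is the standard textbook argument, so there is nothing to reconcile beyond the detail you already flag about matching the shortcut order to the preorder list.
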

\begin{proof}
See Theorem 35.2 in \textcite{cormen2009introduction}.
\end{proof}

We make the following two observations. First, the approximation algorithm's input is an \emph{undirected} graph $\mathcal{G}^{t} = (\mathcal{V}^{t},\mathcal{A}^{t})$ obtained from the original directed graph $\mathcal{G} = (\mathcal{V}, \mathcal{A})$. Specifically, we define $\mathcal{V}^{t} = \{0, \ldots, n\}$ and $\mathcal{A}^{t} = \{(i,j) \mid (i,j)\in \mathcal{A} \vee (j,i)\in \mathcal{A}\}$. We also define the cost function $c': \mathcal{A}^{t} \mapsto \mathbb{R}$ satisfying the triangle inequality as $c'_{ij} = c_{t}\,t_{ij}$ for each edge $(i,j) \in \mathcal{A}^{t}$. This means that to construct the giant tour, we only consider the expected travel time cost. Second, by using a polynomial-time 2-approximation algorithm, we are rapidly finding a Hamiltonian cycle whilst ensuring the quality of the solution found. 

Given a giant tour $\mathcal{H}$, we split it to find a set of routes that compose an initial feasible solution for the SAR problem. This procedure is referred to as split and uses an auxiliary directed acyclic graph of possible trips, denoted as $\mathcal{G}^{s} = (\mathcal{V}^{s}, \mathcal{A}^{s})$. Each arc $(i,j) \in \mathcal{A}^{s}$ represents the route that departs from the depot $0$, visits all vertices in the sequence $\mathcal{H}$ which are after node $i$ and until node $j$, and returns to the depot $n+1$. Therefore, the cost of each arc is the sum of the hiring cost, the expected travel time cost, and the expected overtime cost of the route it represents. Since the graph considers all possible trips that can be obtained following the sequence of the giant tour, by finding the shortest path between $0$ and the last node in the sequence in $\mathcal{G}^{s}$ --which can be performed efficiently using Bellman-Ford's algorithm-- we find a set of optimal routes (subject to the order defined by $\mathcal{H}$). Interestingly, the time complexity of this procedure is polynomial in the input size. Figure \ref{fig:split} illustrates the split mechanism with the approximation algorithm. Figure  \ref{fig:split}(a) shows the minimum spanning tree, Figure \ref{fig:split}(b) depicts the giant tour, Figure \ref{fig:split}(c) presents the graph of possible trips where the dashed arcs denote the shortest path, and Figure \ref{fig:split}(d) exhibits the routes found.

\begin{figure}
    \centering
    \begin{subfigure}[b]{0.475\textwidth}
        \centering
        \scalebox{0.7}{
    \begin{tikzpicture}
        \node[od] (depot) at (1,2) {$0$};
        \node[intermediate](1) at (-4,3) {$2$};
        \node[intermediate](2) at (-2,4) {$3$}
            edge[-, >=stealth, line width=0.3mm, black]    (depot);
        \node[intermediate](3) at (4,4) {$5$}
            edge[-, >=stealth, line width=0.3mm, black]    (2);
        \node[intermediate](4) at (3,6) {$4$}
            edge[-, >=stealth, line width=0.3mm, black]    (2);
        \node[intermediate](5) at (-2,1) {$1$}
            edge[-, >=stealth, line width=0.3mm, black]    (depot)
            edge[-, >=stealth, line width=0.3mm, black]    (1);
        \draw [<->,white, >=stealth, line width=0.3mm] (5) to [out=350,in=250] (3);
        \draw [<->,white, >=stealth, line width=0.3mm] (1) to [out=50,in=170] (4);
    \end{tikzpicture}
    }
    \caption[]%
    {{\small Minimum spanning tree $\mathcal{T}$}}    
        \label{fig:mean and std of net14}
    \end{subfigure}
    \hfill
    \begin{subfigure}[b]{0.475\textwidth}  
        \centering 
       \scalebox{0.7}{
        \begin{tikzpicture}
            \node[od] (depot) at (1,2) {$0$};
            \node[intermediate](1) at (-4,3) {$2$}
                edge[-,dashed, >=stealth, line width=0.3mm]    (depot);
            \node[intermediate](2) at (-2,4) {$3$}
                edge[<-, >=stealth, line width=0.3mm]    (1)
                edge[-,dashed, >=stealth, line width=0.3mm]    (depot);
            \node[intermediate](3) at (4,4) {$5$}
                edge[->, >=stealth, line width=0.3mm]    (depot);
            \node[intermediate](4) at (3,6) {$4$}
                edge[<-, >=stealth, line width=0.3mm]    (2)
                edge[->, >=stealth, line width=0.3mm]    (3)
                edge[-,dashed, >=stealth, line width=0.3mm]    (depot);
            \node[intermediate](5) at (-2,1) {$1$}
                edge[<-, >=stealth, line width=0.3mm]    (depot)
                edge[->, >=stealth, line width=0.3mm]    (1);
            \draw [->,white, >=stealth, line width=0.3mm] (5) to [out=345,in=240] (depot);
        \end{tikzpicture}
        }
        \vspace*{1.5mm}
        \caption[]%
        {{\small Giant tour $\mathcal{H}$}}    
        \label{fig:mean and std of net24}
    \end{subfigure}
    \vskip\baselineskip
    \begin{subfigure}[b]{0.475\textwidth}   
        \centering 
        \scalebox{0.7}{
        \begin{tikzpicture}
            \node[od] (depot) at (0,0) {$0$};
            \node[intermediate](1) at (2,0) {$1$}
                edge[<-, >=stealth, line width=0.3mm]    (depot);
            \node[intermediate](2) at (4,0) {$2$}
                edge[<-, >=stealth, line width=0.3mm]    (1);
            \node[intermediate](3) at (6,0) {$3$}
                edge[<-, >=stealth, line width=0.3mm]    (2);
            \node[intermediate](4) at (8,0) {$4$}
                edge[<-, >=stealth, line width=0.3mm]    (3);
            \node[intermediate](5) at (10,0) {$5$}
                edge[<-, >=stealth, line width=0.3mm]    (4);
            
            \draw [->,black, >=stealth, line width=0.3mm] (depot) to [out=35,in=145] (2);
            \draw [->,dashed, black, >=stealth, line width=0.3mm] (depot) to [out=35,in=145] (3);
            \draw [->,black, >=stealth, line width=0.3mm] (depot) to [out=35,in=145] (4);
            \draw [->,black, >=stealth, line width=0.3mm] (depot) to [out=35,in=145] (5);
            
            \draw [->,black, >=stealth, line width=0.3mm] (1) to [out=325,in=235] (3);
            \draw [->,black, >=stealth, line width=0.3mm] (1) to [out=325,in=235] (4);
            \draw [->,black, >=stealth, line width=0.3mm] (1) to [out=325,in=235] (5);
            
            \draw [->,black, >=stealth, line width=0.3mm] (2) to [out=35,in=145] (4);
            \draw [->,black, >=stealth, line width=0.3mm] (2) to [out=35,in=145] (5);
            
            \draw [->,black, dashed,  >=stealth, line width=0.3mm] (3) to [out=325,in=235] (5);
            
        \end{tikzpicture}
        }
        \vspace*{1.5mm}
        \caption[]%
        {{\small Graph of possible trips $\mathcal{G}^{s}$}}    
        \label{fig:mean and std of net34}
    \end{subfigure}
    \hfill
    \begin{subfigure}[b]{0.475\textwidth}   
        \centering 
        \scalebox{0.7}{
        \begin{tikzpicture}
            \node[od] (depot) at (1,2) {$0$};
            \node[intermediate](1) at (-4,3) {$2$};
            \node[intermediate](2) at (-2,4) {$3$}
                edge[<-, >=stealth, line width=0.5mm, black]    (1)
                edge[->, >=stealth, line width=0.5mm, black]    (depot);
            \node[intermediate](3) at (4,4) {$5$}
                edge[->, >=stealth, line width=0.5mm,black]    (depot);
            \node[intermediate](4) at (3,6) {$4$}
                edge[->, >=stealth, line width=0.5mm, black]    (3)
                edge[<-, >=stealth, line width=0.5mm, black]    (depot);
            \node[intermediate](5) at (-2,1) {$1$}
                edge[<-, >=stealth, line width=0.5mm, black]    (depot)
                edge[->, >=stealth, line width=0.5mm, black]    (1);
                \draw [->,white, >=stealth, line width=0.3mm] (5) to [out=345,in=240] (depot);
        \end{tikzpicture}
        }
        \vspace*{1.5mm}
        \caption[]%
        {{\small Resulting routes}}    
    \end{subfigure}
        \vspace*{2mm}
        \caption[]
        {\small Illustrative example of the split mechanism with the approximation algorithm} 
        \label{fig:split}
\end{figure}
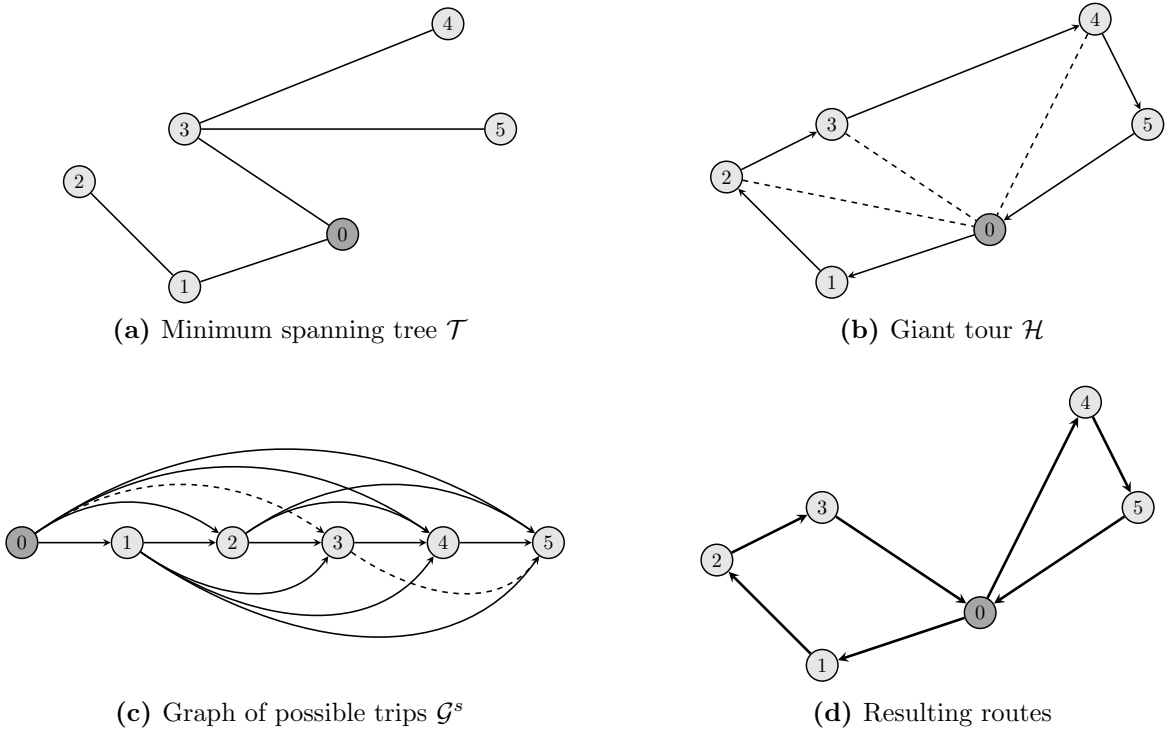

\subsection{Solving the pricing problem} \label{ss:pricing}

We use an integer programming formulation to solve the Elementary Shortest Path Problem. Since the costs $r_{ij} \in \mathbb{R}$ might induce negative cycles, the system of inequalities \eqref{balance}--\eqref{nature-pricing} becomes insufficient to guarantee the \emph{elementarity} of the path (\cite{taccari2016integer}). Thus, we must incorporate additional constraints to the integer program \eqref{FO_pricing}--\eqref{nature-pricing} to prevent \emph{subtours}. In particular, we adopt a \emph{cutting-plane} like approach in which we dynamically add \emph{generalized cutset inequalities} (GCS) to the formulation (\cite{taccari2016integer}). The GCS are defined as

\begin{align}
    \sum_{(i,j) \in \mathcal{A}(\mathcal{S})}b_{ijr} \leq \sum_{i\in \mathcal{S}\setminus \{u\}}\sum_{j\mid (i,j)\in \mathcal{A}}b_{ijr}, \quad \quad \quad \forall u \in \mathcal{S}, \mathcal{S}\subseteq \mathcal{V} \label{GCS}
\end{align}

where $\mathcal{S}$ denotes a subtour and $\mathcal{A}(\mathcal{S})$ is the set of arcs with both ends in $\mathcal{S}$. Algorithm \ref{IP-subproblem} presents the integer programming-based solution approach to solve the subproblem. We iteratively solve the integer program \eqref{FO_pricing}--\eqref{nature-pricing} and add the GCS when necessary. Once we find a solution without subtours, we return the corresponding elementary path. We make the following three observations. First, we tested other families of cuts (e.g., the \emph{Dantzig-Fulkerson-Johnson}) and the \emph{Miller-Tucker-Zemlin} model, but the GCS proved to be the most effective. Second, in an intermediate iteration of the \textbf{while} loop of Algorithm \ref{IP-subproblem}, the solution to the integer program might have an underlying elementary path. If that path has a negative reduced cost, we stop the execution and return it. Third, an integer-programming based solution method probably is less effective than a specialized algorithm, but it can be easily implemented in mathematical optimization software.

{\centering
\begin{algorithm}[ht]
\footnotesize
\DontPrintSemicolon
  \KwInput{$\mathcal{G}= (\mathcal{V},\mathcal{A})$, directed graph; $r_{ij}$, costs.}
  \KwOutput{$\mathcal{P}$, elementary path.} 
  \While{\texttt{true}}{
  \text{solve integer program \eqref{FO_pricing}--\eqref{nature-pricing}} \\
  \text{let $\mathcal{F}$ be the set of subtours in the solution found}\\
  \If{$|\mathcal{F}| = 0$}{
    let $\mathcal{P}$ be the elementary path found by the integer program\\
    \textbf{break}
  }\Else{
   \tcc{we add the GCS to break subtours}
  \For{$\mathcal{S} \in \mathcal{F}$}{
    add to the formulation constraints \eqref{GCS} for subtour $\mathcal{S}$
  }
  
  }
  
  }
  \Return{$\mathcal{P}$}
  \caption{Integer programming-based method for the pricing problem} 
  \label{IP-subproblem}
\end{algorithm}}

\rev{To accelerate the column generation algorithm, we also propose a heuristic method to solve the pricing problem. According to \textcite{desaulniers2002accelerating}, two of the most prominent acceleration strategies for column generation-based algorithms are: (i) adding a \emph{pool} of columns each iteration; and (ii) solving the pricing problem heuristically (i.e., not to optimality). To that end, we use the route-first cluster-second principle --as in Section \ref{ss: initial-solution}-- to solve the Elementary Shortest Path Problem. Specifically, we construct a giant tour $\mathcal{H}$ and a graph of possible trips $\mathcal{G}^{s}$ as before, except that using the reduced cost $r_{ij}$ to compute the weight of the arcs and considering only the nodes $i \in \mathcal{N}$ such that $\pi_{i}>0$. Thus, each time we execute the split, we obtain a set of feasible routes, and we keep those with a negative reduced cost. We remark the following. First, the costs $r_{ij}$ do not necessarily satisfy the triangle inequality, and therefore, the approximation ratio of Algorithm \ref{TSP-Approx} is no longer guaranteed. Second, in the graph of possible trips, the arcs' weight may be negative, but there cannot be negative cycles meaning the shortest path problem can be efficiently solved.} 

\subsection{Column generation-based heuristic} \label{ss: cg-heuristic}
The column generation algorithm described could be embedded in a branch-and-price scheme to find an optimal integer solution for the SAR problem. However, in order to find solutions in a reasonable time for on-time real-world applications, we propose a column generation-based heuristic described in Algorithm \ref{CG-heuristic}. The heuristic method finds new, promising routes solving the pricing problem until either the running time exceeds a predefined maximum running time $t_{\max}$ or no paths with negative reduced cost are found. Then, to find an integer solution, we solve the set covering model imposing the integrality conditions with the generated routes. Finally, the heuristic returns $\Omega^{*}$, the set of routes used in the solution for the SAR problem.

\rev{The proposed ideas can be embedded as a building block in different algorithms. For the purpose of the present study, we propose two algorithm configurations: (i) the Exact Method (EM), in which the pricing problem is solved exactly using the mathematical programming-based method; and (ii) the Heuristic Method (HM), in which we use the route-first cluster-second split mechanism to solve the subproblem. Since the EM solves the pricing problem exactly, its solution to the LP relaxation of the set covering model \eqref{11}--\eqref{13} is a dual (i.e., lower) bound on the optimal objective and can be used to compute an optimality gap. Regarding the parameter $t_{\max}$, note that it allows decision-makers to balance between the computational time and the quality of the solution. Note too that $t_{\max}=0$ implies solving the SAR problem with the initial solution.} 

{\centering
\begin{algorithm}[ht]
\footnotesize
\DontPrintSemicolon
  \KwInput{$\mathcal{G} = (\mathcal{V},\mathcal{A})$, directed graph; $t_{\max}$, maximum running time.}
  \KwOutput{$\Omega^{*}$, solution for the SAR problem.} 
  find an initial feasible solution \tcc*[r]{\emph{see \S \ref{ss: initial-solution}}}
  $\Omega \text{\textquotesingle} \leftarrow$ routes corresponding to the initial solution\\
  \While{running time $\leq t_{\max}$}{
  solve the LP relaxation of the set covering model \eqref{11}--\eqref{13}  \tcc*[r]{\emph{see \S \ref{ss:set covering}}}
  $\mathcal{P}\leftarrow$ path found by solving the pricing problem \tcc*[r]{\emph{see \S \ref{ss:pricing}}}
  \If{$\mathcal{P}$ has a negative reduced cost}{
  $\Omega \text{\textquotesingle} \leftarrow \Omega \text{\textquotesingle} \cup \mathcal{P}$
  }\Else{
    \textbf{break}
  }
  }
  let $\Omega^{*}$ be the optimal solution of the set covering model with integrality conditions \\
  \Return{$\Omega^{*}$}
  \caption{Column generation-based heuristic} 
  \label{CG-heuristic}
\end{algorithm}}

\section{Simulation-based method for the appointment scheduling problem} \label{s: appointment}
In this section, we tackle the appointment scheduling problem using a simulation-based method that captures the risk profile of decision-makers (\S \ref{ss:simulation-model}). Before presenting the simulation model, we discuss the characterization of stochastic travel time, service time, and cancellation (\S \ref{ss:stochastic-times}).

\subsection{Stochastic travel time, service time, and cancellation} \label{ss:stochastic-times}
Choosing the distribution of stochastic parameters correctly becomes critical since the results of the simulation model can be heavily determined by the assumptions made about those distributions. Therefore, we discuss the characterization of stochastic travel time, service time, and cancellation. For that purpose, we assume there is historical data from which we can compute the sample mean travel time $\hat{t}_{ij}$ between each pair of locations $i$ and $j$, the sample mean service time $\hat{s}$, and an estimated probability of cancellation $\hat{p}_{i}$ for each customer $i \in \mathcal{N}$.

First, we address stochastic travel times. For each arc $(i,j) \in \mathcal{A}$, we have a random variable $t_{ij}$ representing the traveling time from node $i$ to node $j$. Characterizing travel times on real-world transportation networks has been a major stream of research (e.g., \cite{tacs2013vehicle, gomez2016modeling, Mahmassani2012,Prakash2017,sun2011estimation}). For the purpose of this study, we use a simple yet robust method to obtain the distribution of travel times along the lines of \textcite{Prakash2017}. To estimate travel time variance $\hat{v}_{ij}$, we use a linear relation between the sampled mean $\hat{t}_{ij}$ and the standard deviation $\sqrt{\hat{v}_{ij}}$ of travel time. This relation measured in minutes per mile is given by the following equation (\cite{Mahmassani2012}) 
\begin{align}
\sqrt{\hat{v}_{ij}}= -0.4736 + 0.9936 \; \hat{t}_{ij} \nonumber
\end{align}

Moreover, \textcite{Prakash2017} claim that real-world travel times follow a lognormal distribution. In this light, for each arc $(i,j) \in \mathcal{A}$ we define 
\begin{align*}
    t_{ij} \sim \text{Lognormal}(\mu_{ij}, \sigma_{ij})
\end{align*}

Using the method of moments, the relation between the mean $\hat{t}_{ij}$ and variance $\hat{v}_{ij}$ of the travel time and the parameters of the lognormal distribution follows (\cite{Yamin2020, Prakash2018})

\begin{align}
\hat{\mu}_{ij} = \ln\left(\frac{\hat{t}_{ij}^2}{\sqrt{\hat{v}_{ij}+\hat{t}_{ij}^{2}}}\right) \quad \text{and} \quad \hat{\sigma}_{ij} = \sqrt{\ln\left(1+\frac{\hat{v}_{ij}}{\hat{t}_{ij}^{2}}\right)} \nonumber
\end{align}

Second, we consider stochastic service times. Since the exponential distribution is often used to represent service times (\cite{gomez2016modeling}), for each $i \in \mathcal{N}$, we have that
\begin{align*}
    s_{i} \sim \text{Exponential}(\lambda)
\end{align*}
Again, we use the method of moments to establish the relationship between the sample mean $\hat{s}$ and the parameter of the exponential distribution $\lambda$. We obtain the following equation
\begin{align*}
    \hat{\lambda} = \frac{1}{\hat{s}}
\end{align*}

Lastly, to capture the customers' cancellations during the day of service, we use a Bernoulli$(\gamma_{i})$ distribution. Given an estimated (or predicted) probability of cancellation $\hat{p}_{i}$ of customer $i$, by the method of moments, we have that $\hat{\gamma}_{i} = \hat{p}_{i}$. 

\rev{Using distributions that correctly model the stochastic parameters enhances our solution scheme as it becomes more robust and reliable. Nevertheless, the proposed ideas are not limited to these distributions and can be flexibly adapted to other families. To that end, Appendix B presents an overview of the method of moments in order to estimate the parameters of a given probability distribution. Also, we remark that one can find a well-suited distribution to a parameter by performing a goodness-of-fit test on the historical data set.}

\subsection{Simulation model} \label{ss:simulation-model}

To solve the H-SARA problem, we still need to schedule a visiting time for each customer; that is, we need to solve an appointment scheduling problem. Since we have previously solved the SAR problem with the column generation-based heuristic (see \S \ref{ss: cg-heuristic}), we already have the set of optimal routes $\Omega^{*}$. For each route $r\in \Omega^{*}$, we must establish the visiting time $w_{i}$ for each customer $i$ visited by that route. Therefore, we need to solve the appointment scheduling problem for each route independently. For the sake of clarity, we explain the simulation model for a given route $r$.

Let the random variable $\psi_{i}$ denote the arrival time to customer $i$ when visited by route $r$ and let $p_{\psi_{i}}(\cdot)$ be its probability distribution. Since $p_{\psi_{i}}(\cdot)$ can be difficult to find theoretically, we use a simulation-driven approach. More precisely, we conduct a Monte-Carlo simulation experiment, compute the arrival time to each node in the route, and obtain the \emph{empirical} distribution of $\psi_{i}$. \rev{We remark that the simulation model does account for the customers' cancellations. In particular, if the result of the Bernoulli trial is a success for a given customer --meaning that the customer cancels its appointment-- the service teams go directly to the location of the next customer on the route. Note that, in this way, appointments are scheduled early when the previous customer on the route has a high chance of canceling. Thus, when a cancellation occurs, the vehicles' idle time is minimized.}

We use $p_{\psi_{i}}(\cdot)$ to decide the appointment time for customer $i \in \mathcal{N}$. When doing so, we need to consider the costs for one time unit of earliness $c_{e}$ and one time unit of delay $c_{d}$. However, $c_{e}$ and $c_{d}$ can be difficult to estimate in real-world applications. Also, due to the high variance in urban transportation systems, it might be more relevant to consider a reliability measure rather than a cost. In this light, we define $\alpha \in [0,1]$ as the desired probability of on-time arrival (i.e., service team arriving before the scheduled time) to the customers' appointments. Correspondingly, $(1-\alpha)$ is the late-arrival probability referring to the probability of the service team arriving after the scheduled time. Given a value of $\alpha$ --which is specified by the decision-maker-- we simply set $w_{i}$ to the $\alpha$-percentile of the arrival time distribution $p_{\psi_{i}}(\cdot)$. On the one hand, if $\alpha$ is close to one --meaning the decision-maker is interested in maximizing the on-time arrival to the appointments-- the value of $w_{i}$ would lie on the \emph{right tail} of $\psi_{i}$, as Figure \ref{fig:empirical-distribution}(a) shows. On the other hand, if $\alpha$ is close to zero --meaning the decision-maker is interested in minimizing teams' idle time-- the value of $w_{i}$ would be lie on the \emph{left tail} of $\psi_{i}$, as Figure \ref{fig:empirical-distribution}(b) exhibits. Using a simulation-driven approach, our solution method captures decision-makers' risk profiles and ensures a reliable solution.

\begin{figure}
\centering
\begin{subfigure}{.5\textwidth}
  \centering
    \begin{tikzpicture}[
        declare function={gamma(\z)=
        2.506628274631*sqrt(1/\z)+ 0.20888568*(1/\z)^(1.5)+ 0.00870357*(1/\z)^(2.5)- (174.2106599*(1/\z)^(3.5))/25920- (715.6423511*(1/\z)^(4.5))/1244160)*exp((-ln(1/\z)-1)*\z;},
        declare function={gammapdf(\x,\k,\theta) = 1/(\theta^\k)*1/(gamma(\k))*\x^(\k-1)*exp(-\x/\theta);}, scale = 0.9
    ]
    \begin{axis}[
      no markers, domain=0:9, samples=100,
      axis lines=left, xlabel=$\psi_{i}$, ylabel=$p_{\psi_{i}}(\cdot)$,
      every axis y label/.style={at=(current axis.above origin),anchor=east},
      every axis x label/.style={at=(current axis.right of origin),anchor=north},
      height=5cm, width=9cm,
      xtick={9.0}, ytick=\empty,
      xticklabels={$w_{i}$},
      enlargelimits=false, clip=false, axis on top,
      grid = none
      ]
    
    \addplot [very thick,cyan!40!gray,domain=0:20] {gammapdf(x,2,2)};
    \addplot [fill=cyan!20, draw=none, domain=0:9.0] {gammapdf(x,2,2)} \closedcycle;
    \addplot [very thick, fill=red!40!gray, draw=none, domain=9.01:20] {gammapdf(x,2,2)} \closedcycle;
    \addplot +[color=black,mark=none, line width=1.5pt] coordinates {(9.01, 0) (9.01, 0.185)};
    \end{axis}
    \end{tikzpicture}
  \caption{Maximizing on-time arrival}
  \label{fig:case1}
\end{subfigure}%
\begin{subfigure}{.5\textwidth}
  \centering
    \begin{tikzpicture}[
        declare function={gamma(\z)=
        2.506628274631*sqrt(1/\z)+ 0.20888568*(1/\z)^(1.5)+ 0.00870357*(1/\z)^(2.5)- (174.2106599*(1/\z)^(3.5))/25920- (715.6423511*(1/\z)^(4.5))/1244160)*exp((-ln(1/\z)-1)*\z;},
        declare function={gammapdf(\x,\k,\theta) = 1/(\theta^\k)*1/(gamma(\k))*\x^(\k-1)*exp(-\x/\theta);}, scale = 0.9
    ]
    \begin{axis}[
      no markers, domain=0:9, samples=100,
      axis lines=left, xlabel=$\psi_{i}$, ylabel=$p_{\psi_{i}}(\cdot)$,
      every axis y label/.style={at=(current axis.above origin),anchor=east},
      every axis x label/.style={at=(current axis.right of origin),anchor=north},
      height=5cm, width=9cm,
      xtick={2.0}, ytick=\empty,
      xticklabels={$w_{i}$},
      enlargelimits=false, clip=false, axis on top,
      grid = none
      ]
    
    \addplot [very thick,cyan!40!gray,domain=0:20] {gammapdf(x,2,2)};
    \addplot [fill=cyan!20, draw=none, domain=0:2.0] {gammapdf(x,2,2)} \closedcycle;
    \addplot [very thick, fill=red!40!gray, draw=none, domain=2.01:20] {gammapdf(x,2,2)} \closedcycle;
    \addplot +[color=black,mark=none, line width=1.5pt] coordinates {(2, 0) (2, 0.185)};
    \end{axis}
    \end{tikzpicture}
  \caption{Minimizing teams' idle time}
  \label{fig:case2}
\end{subfigure}
\caption{Empirical arrival time distribution $p_{\psi_{i}}(\cdot)$. The blue area is the probability of on-time arrival. The red area is the probability of late-arrival.}
\label{fig:empirical-distribution}
\end{figure}
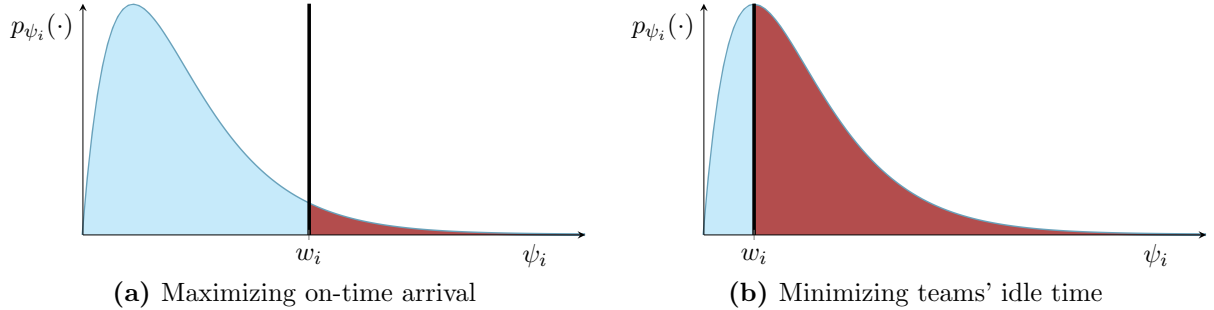

{\color{blue}
\section{Computational experiments} \label{s: computational-experiments}

In this section, we execute computational experiments to evaluate the performance of the proposed method for the H-SARA problem. In Section \ref{ss: runningTime}, we investigate the trade-off between computational time and quality of the solution. In Section \ref{ss: sensitivy}, we evaluate the impact of the cost parameters and the decision-makers' risk profiles on the solution. Finally, Section \ref{ss: AIMMS} presents a high-level decision support tool implemented in AIMMS. We coded all programs in Python and compiled them on a computer with an Intel Core i7-8665U CPU @ 2.11GHz with 16GB of memory. Also, we used Gurobi as mathematical optimization engine and \emph{NetworkX} for the graph algorithms. 

\subsection{Performance of the proposed method} \label{ss: runningTime}

In this set of experiments, we investigate the trade-off between the computational time and the quality of the solution provided by the column generation algorithm for the SAR problem. To do so, we compare two configurations of the algorithm: the Exact Method (EM) and the Heuristic Method (HM). To compare the gains of solving the pricing problem exactly and heuristically, we use performance metrics such as the speedup and the optimality gap. We also evaluate the quality of the Initial Solution (IS).

We generate the test instances as follows. We assume that the depot is at the origin node, and the customers are distributed uniformly in a square of edge 50 km with the origin at the center. The mean value of travel time between two locations is equal to the Euclidean distance since the mean travel speed is assumed to be 1 km/min. The mean customer service times are randomly generated from a uniform distribution in the interval $[30, 60]$. Furthermore, we set $L = 250$, $c_f = 100$, $c_{t} = 1$, and $c_{o} = 2$. For the simulation model, we consider 100 replicas and a probability of cancellation equal to 0.1 for each customer, and we set the value of $\alpha$ to $0.5$. We execute each configuration of the proposed method in 10 randomly generated instances varying the number of customers $n=10,20,30,40,50,200,500$, leading to a total of 100 runs (we did not execute the EM in test cases with more than 30 customers since the computational time exceeded three hours). Lastly, we set $t_{\max} = \infty$ which means that the route-finding procedure stops once no paths with negative reduced cost are found. To show the scalability of the proposed ideas, we execute the method with 1,000 and 3,000 customers, and solved the SAR problem with the IS. For this last experiment, we consider the costumers to be in a square of edge 400 km with the depot at the center.  

Table \ref{tab:assessmet} summarizes the main results. Column 1 shows the number of customers. Columns 2 through 7 present the average objective (Avg. \texttt{Obj}) and running time in seconds (Avg. \texttt{CPU}) of the IS, the HM, and the EM, respectively. Column 8 reports the average \texttt{speedup} achieved by the HM against the EM. Finally, columns 9 and 10 exhibit the average \texttt{gap} --calculated as in equation \eqref{gap}-- between the objective found by the IS and the HM with respect to the objective found by the EM. For the EM, we report the Avg. \texttt{Obj} of the LP relaxation of the set covering model (since it is a lower bound on the optimal objective) whereas, for the HM, we report the  Avg. \texttt{Obj} of the integer set covering problem solved with the generated routes.  

\begin{align}
    \texttt{gap}(z,z') = \frac{(z-z')}{z'} \times 100
    \label{gap}
\end{align}

\begin{table}[ht]
    \centering
    \scalebox{0.75}{
    \begin{tabular}{c|cc|cc|cc|c|cc} \hline \hline
        \multicolumn{1}{c|}{$n$} & \multicolumn{2}{c|}{IS} & \multicolumn{2}{c|}{HM} & \multicolumn{2}{c|}{EM} & \multicolumn{1}{c|}{Avg. \texttt{speedup}} & \multicolumn{2}{c}{Avg. \texttt{gap} (\%)}\\
        \hline
        \multirow{2}{*}{Customers} & \multirow{2}{*}{Avg. \texttt{Obj}} & \multirow{2}{*}{Avg. \texttt{CPU}} & \multirow{2}{*}{Avg. \texttt{Obj}} & \multirow{2}{*}{Avg. \texttt{CPU}} &
        \multirow{2}{*}{Avg. \texttt{Obj}} & \multirow{2}{*}{Avg. \texttt{CPU}} &
        \multirow{2}{*}{EM/HM} & \multirow{2}{*}{(IS,EM)} & \multirow{2}{*}{(HM,EM)}
        \\ & & & & & & & & & \\
        \hline
    10 & 511.32 & 0.00 & 505.32 & 0.03 & 467.93 & 4.91 & 176.03 & 9.27 & 8.05\\
    20 & 908.94 & 0.00 & 867.86 & 0.08 & 811.67 & 307.67 & 4,282.48 & 11.92 & 6.81\\
    30 & 1,306.51 & 0.01 & 1,283.05 & 0.29 & 1,174.36 & 7,441.67 & 33,304.38 & 11.25 & 9.26\\
    40 & 1,647.48 & 0.01 & 1,633.26 & 0.33 &  -  &  -  &  -  &  -  &  - \\
    50 & 1,999.88 & 0.04 & 1,982.81 & 0.66 &  -  &  -  &  -  &  -  &  - \\
    200 & 7,234.94 & 0.66 & 7,209.98 & 35.23 &  -  &  -  &  -  &  -  &  - \\
    500 & 17,456.96 & 9.18 & 17,377.76 & 1,709.62 &  -  &  -  &  -  &  -  &  - \\
    \hline
    1,000 & 123,303.80 & 111.34 & - & - & - & - & - & - & - \\
    3,000 & 324,215.30 & 3,001.84 & - & - & - & - & - & - & - \\
    \hline \hline
    \end{tabular}}
    \caption{Performance of the proposed method}
    \label{tab:assessmet}
\end{table}

From Table \ref{tab:assessmet} we highlight the following takeaways. First, the initial solution is found in less than a second (except for the instances with 500 customers) and is always within $12\%$ of the optimal objective. Second, we observe that the HM can be 33,000 times faster than the EM. Interestingly, the running time of the HM seems to increase linearly with the number of customers, indicating its capacity to scale and solve large-scale instances efficiently. Moreover, the optimality gap of the solution found by the HM ranges from $8.05\%$ to $9.26\%$, which is likely to be sufficient for real-world applications. Third, we see that the column generation procedure reduces the optimality gap with respect to the initial solution by approximately $1\%$ to $5\%$. We note that the running time of the simulation model was, on average, just 10\% of the total running time, meaning it is an inexpensive procedure. We conclude that the HM finds good solutions in a reasonable time, even for large-scale, real-world instances. However, the EM provides better solutions for long-term planning applications at the expense of a heavy computational burden. Finally, regarding the instances with $1,000$ and $3,000$ customers, we see that the method is able to solve very large test cases using the IS.

\subsection{Sensitivity analysis on the costs and the decision-makers profiles} \label{ss: sensitivy}

In this set of experiments, we perform a sensitivity analysis on the costs and the decision-makers profiles. In particular, we analyze the changes in the solution and the performance of the algorithm when varying the hiring cost, the unit travel time cost, the unit overtime cost, and the desired probability of on-time arrival. We generate the instances as described in Section \ref{ss: runningTime} but considering only 20, 30, and 40 customers and executing only the HM.

First, we investigate the impact of changing the cost-related parameters. To do so, we use three configurations: A (high hiring cost), B (high overtime cost), and C (high travel time cost). We execute the HM on each configuration 15 times, leading to a total of 135 runs. Figure \ref{f_costsSensitivity}(a) and Figure \ref{f_costsSensitivity}(b) present, respectively, the box-plots with the number of routes used and the running time for each configuration varying the number of customers.

\begin{figure}[ht]
  \centering
    \begin{subfigure}[b]{0.48\textwidth}
        \includegraphics[width=0.95\textwidth]{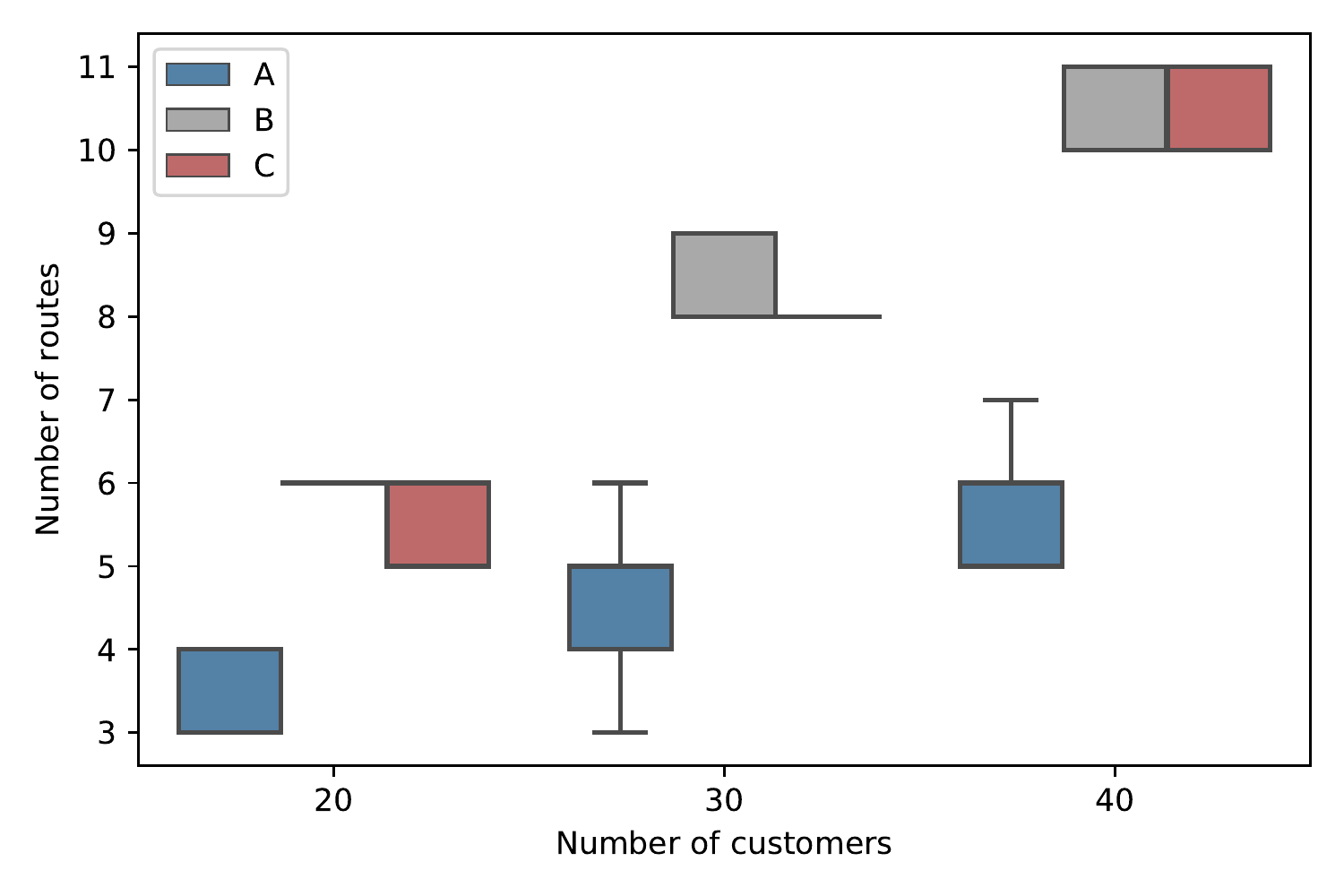}
        \caption{Number of routes}
    \end{subfigure}%
    \begin{subfigure}[b]{0.48\textwidth}
        \includegraphics[width=0.95\textwidth]{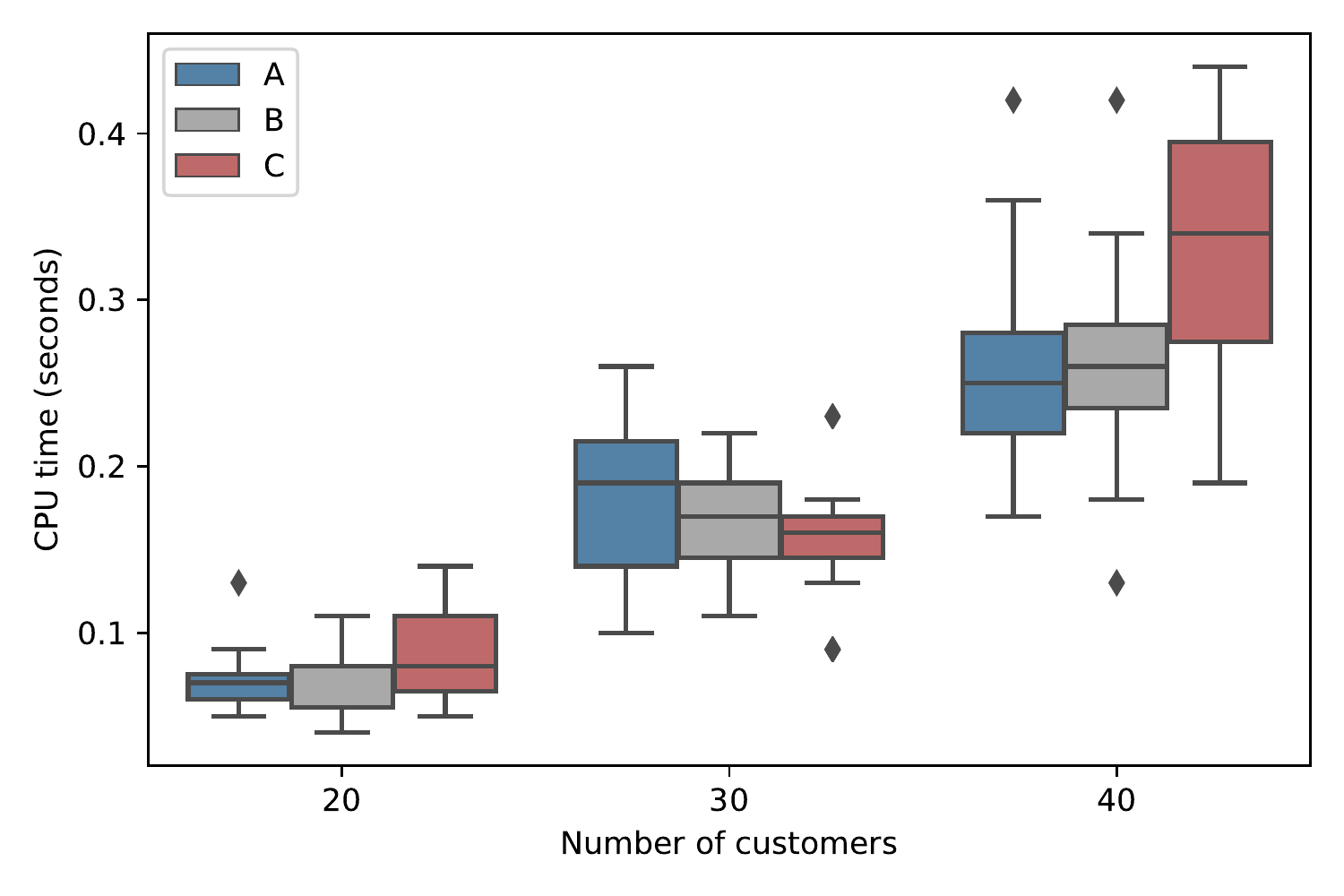}
        \caption{Running time}
    \end{subfigure}%
\caption{Sensitivity analysis on the costs}
\label{f_costsSensitivity}
\end{figure}

From Figure \ref{f_costsSensitivity}(a), we observe that, as expected, configuration A uses fewer routes than configurations B and C because of the high fixed cost of hiring a team. We also see that the number of routes does not vary much for a fixed configuration and number of customers. Regarding Figure \ref{f_costsSensitivity}(b), we note that there is not a common trend to conclude that the changes in the cost-related parameters influence the running time of the method. Furthermore, these results confirm those discussed in Section \ref{ss: runningTime} since the running time of the HM seems to scale linearly with the number of customers (regardless of the configuration). 

Second, we assess the effect of varying the desired probability of on-time arrival. For that purpose, we use three configurations: X (maximizing on-time arrival), Y (neutral), and Z (minimizing teams' idle time). Again, we execute the HM on each configuration 15 times, leading to a total of 135 runs. Figure \ref{f_alphaSensitivity}(a) shows the average difference in the scheduled return time to the depot among the three configurations. We refer to the scheduled return time to the depot as the sum of the latest appointment time in the route, the mean service time (of attending that last customer), and the expected travel time from that last customer to the depot. Figure \ref{f_alphaSensitivity}(b) presents the running time for each configuration varying the number of customers.

\begin{figure}[ht]
  \centering
    \begin{subfigure}[b]{0.48\textwidth}
        \includegraphics[width=0.95\textwidth]{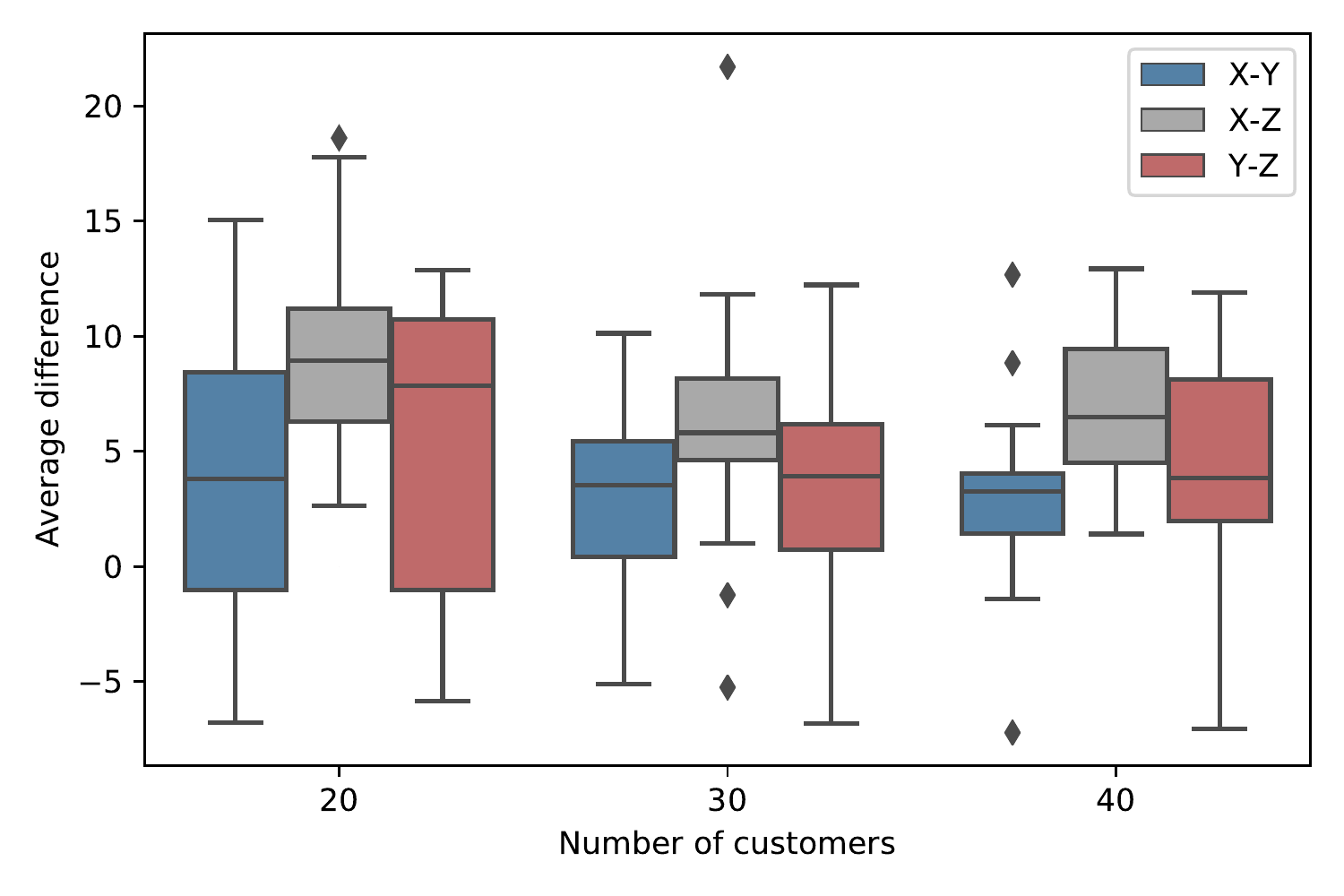}
        \caption{Difference in the return time to the depot}
    \end{subfigure}%
    \begin{subfigure}[b]{0.48\textwidth}
        \includegraphics[width=0.95\textwidth]{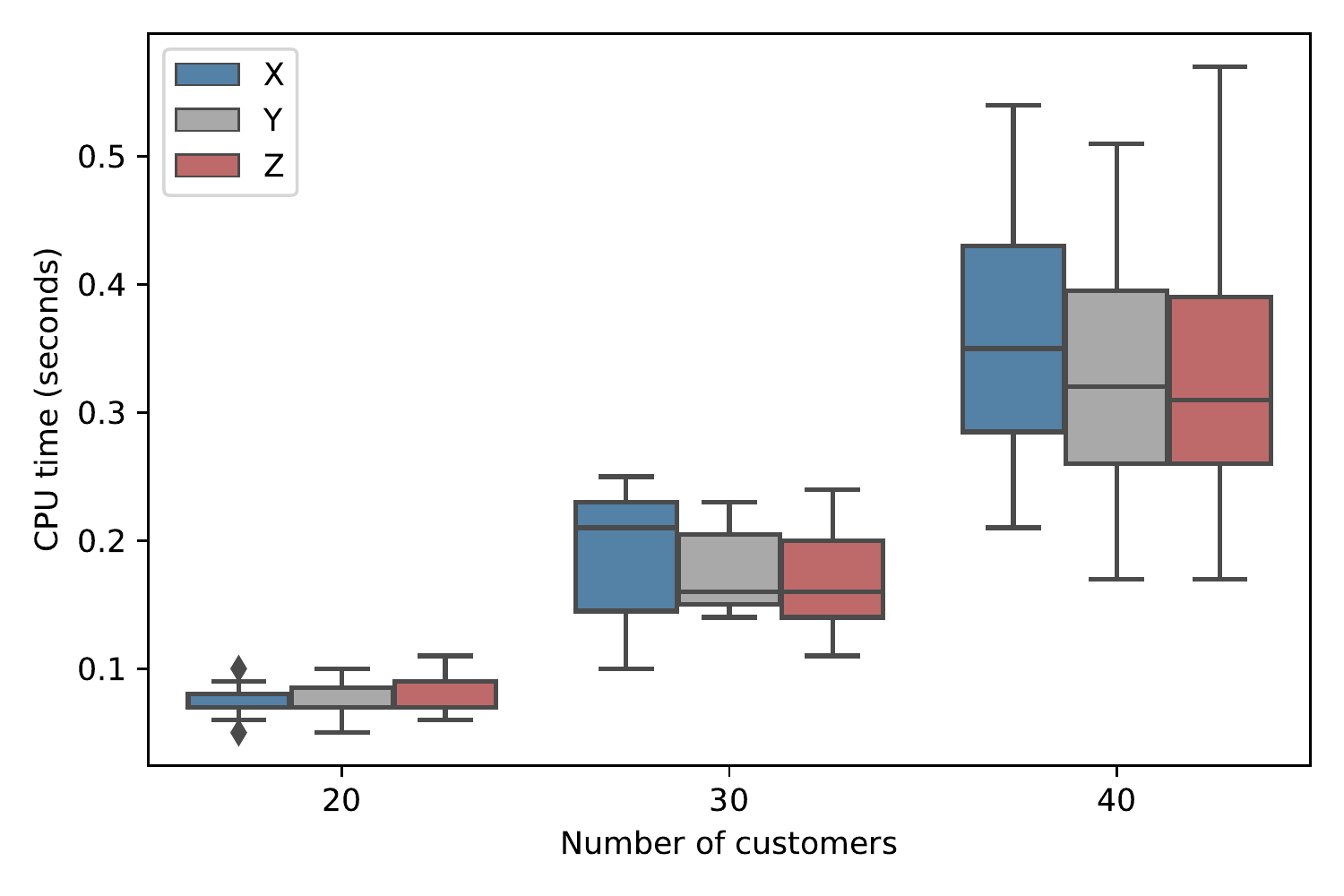}
        \caption{Running time}
    \end{subfigure}%
\caption{Sensitivity analysis on the decision-makers profiles}
\label{f_alphaSensitivity}
\end{figure}

From Figure \ref{f_alphaSensitivity}(a), we see that the average differences in the scheduled return time to the depot between each pair of configurations is frequently greater than zero. This is an expected result considering that configuration Z schedules appointments early whereas configuration X schedules appointments late; therefore, Z's scheduled return should be prior to that of X (with that of Y in between). Nevertheless, for each configuration, we conducted an independent Monte-Carlo simulation. For this reason, the difference between configurations is less than zero in some cases. As was foreseeable, the largest average difference is between configurations X and Z. We remark that the magnitude of the differences between configurations depends on the shape of the empirical distribution that ultimately depends on the assumptions made. Regarding Figure \ref{f_alphaSensitivity}(b), we note that there is no clear pattern, indicating that the running time of the method is not affected by the parameter $\alpha$, that is, by the decision-maker risk profiles.

\subsection{High-level decision support system in AIMMS} \label{ss: AIMMS}

We embedded our solution method into a user-friendly high-level decision support system (DSS) developed in AIMMS. This computerized program can be used in decision-making in the home service industry, allowing decision-makers to solve problems timely, improve their efficiency, and perform other critical tasks related to operations, planning, and management. Conveniently, the DSS is flexible and adaptable to accommodate changes in the environment and the user's decision-making. The application can be used by upper- and mid-level managers to analyze multiple outcomes based on optimization models and analytical techniques.

We discuss some of the essential features of the AIMMS-based application. First, as Figure \ref{fig:application} shows, the user interface of the DSS is intuitive, easy to use, and visually attractive to help managers interact with the tool. Second, the DSS provides a real-world geographical visualization of the solution, facilitating planning and operations-related tasks. Third, the information system presents a cost breakdown so that users gain insight into their operational costs. This feature allows decision-makers to decrease costs, a critical activity in a competitive market. Fourth, the DSS details information regarding the cost, travel time, and visiting customers of each route to help automate the managerial processes. Fifth, the application allows users to perform a what-if analysis on the costs, the desired on-time probability, and the number of customers serviced. This analysis reveals new approaches for the company to increase its competitiveness. Sixth, the DSS outlines a schedule for each customer, including its appointment time and route, to increase the control of the operation. Seventh, the users can specify a maximum running time to balance between the computational time and the quality of the solution. We highlight that this feature allows managers to find solutions within seconds for on-time applications or near-optimal solutions (at the expense of longer computational time) for long-term planning applications. Finally, the information can be uploaded to the application from an external file to facilitate the handling of large data sets. In short, the AIMMS-based DSS helps managers make critical decisions based on analytical methods improving the efficiency and competitiveness of their companies. 

\begin{figure}[ht]
\centering
\includegraphics[width=.90\linewidth]{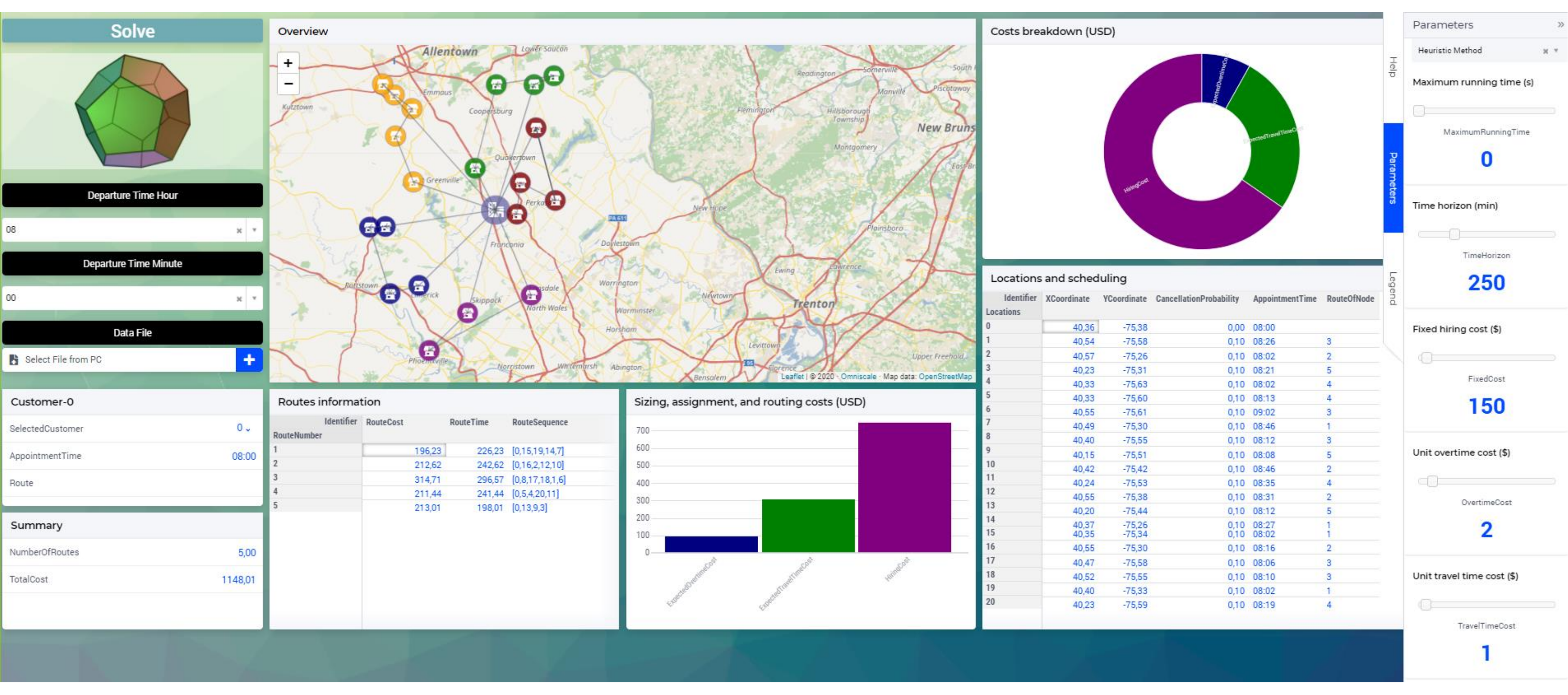}
\caption{Dashboard of the AIMMS-based decision support system}
\label{fig:application}
\end{figure}

}

\section{Concluding remarks} \label{s: conclusion}
In this study, we tackled the \emph{Home Service Assignment, Routing, and Appointment scheduling} (H-SARA) problem problem by proposing a two-stage solution scheme. In the first stage, we used a column generation-based heuristic to solve the sizing, assignment, and routing (SAR) problem. The column generation algorithm is enhanced by a high-quality initial solution found using the route-first cluster-second principle and a polynomial-time 2-approximation algorithm. In the second stage, we proposed a simulation-driven approach to solve the appointment scheduling problem that guarantees reliability in the solution found according to the decision-makers' profile. To ensure the suitability of the simulation model, we discussed the characterization of the stochastic parameters. The computational study concluded that the Heuristic Method (HM) finds good solutions in a reasonable time for real-world applications whereas, for long-term planning applications, the Exact Method (EM) can provide a better solution at the expense of longer computational running time. Furthermore, a sensitivity analysis showed that changing the parameters does not influence the running time of the method and that the solution scheme can adapt to several settings and profiles. Finally, the AIMMS-based decision support system (DSS) helps managers make critical decisions based on analytical methods improving the efficiency and competitiveness of their companies. Future research could embed the proposed ideas into a branch-and-price algorithm, test the method on real-world instances, or implement a specialized exact algorithm for the pricing problem.

{\color{blue}
\section*{Acknowledgements}
The authors would like to thank everyone involved in the 13th AIMMS-MOPTA Optimization Modeling Competition for their arduous work organizing this enriching event. Also, we thank the members of the Center for Optimization and Applied Probability (COPA) for the discussion that improved the article.
}

\singlespacing
\printbibliography

\appendix

\section{Notation table}\label{A:Notation}
 \begin{table}[H]
    \begin{center}
    \scalebox{0.70}{
    {\color{blue}
    \begin{tabular}{cl}
    \hline \hline
    Notation & Description 
    \\ \hline
    \multicolumn{2}{c}{\underline{Problem definition}} \\
    $\mathcal{G}=(\mathcal{V},\mathcal{A})$ & directed graph in which $\mathcal{V}$ is the set of nodes, $\mathcal{N} \subseteq \mathcal{V}$ is the set customers, and $\mathcal{A}$ is the set of arcs.\\
    $s_{i}$ & service time of customer $i \in \mathcal{N}$. \\
    $p_{i}$ & probability associated with customer $i\in \mathcal{N}$ canceling its appointment.\\
    $t_{ij}$ & travel time between $i \in \mathcal{V}$ and $j \in \mathcal{V}$. \\
    $\mathcal{M}$ & set of homogeneous service teams. \\
    $L$ & time limit. \\
    $c_{f}$ & fixed hiring cost. \\
    $c_{t}$ & cost for one unit of travel time. \\
    $c_{o}$ & cost for unit of overtime. \\
    $c_{e}$ & cost for one unit of earliness. \\
    $c_{d}$ & cost for one unit of delay. \\
    $x_{ij}^{k}$ & binary flow variable of vehicle $k \in \mathcal{M}$ along arc $(i,j) \in \mathcal{A}$. \\
    $w_{i}^{k}$ & expected start time of service of vehicle $k \in \mathcal{M}$ at node $i \in \mathcal{V}$. \\
    $\Delta^{k}$ & auxiliary overtime variable of vehicle $k \in \mathcal{M}$. \\
    
    \multicolumn{2}{c}{\underline{Solution method}} \\
    $\Omega$ & set of feasible routes satisfying constraints \eqref{3}--\eqref{5}. \\
    $c_{r}$ & cost of route $r\in \Omega$ including the hiring, travel time, and overtime costs. \\
    $a_{ir}$ & binary parameter indicating whether the route visits node $i\in \mathcal{N}$ or not.\\
    $b_{ir}$ & binary parameter indicating whether the route uses arc $(i,j) \in \mathcal{A}$ or not.\\
    $y_{r}$ & binary variable indicating if route $r\in \Omega$ is used in the solution or not.\\
    $\pi_{i}$ & dual variable of the covering constraint \eqref{12}. \\ 
    $r_{ij}$ & reduced cost. Refer to equation \eqref{reducedCost}.\\
    $\tilde{t}_{ij}$ & modified travel time. Refer to equation \eqref{time}.\\
    $\mathcal{G}^{t} = (\mathcal{V}^{t}, \mathcal{A}^{t})$ & undirected graph to compute the minimum spanning tree. \\
    $\mathcal{H}$ & Hamiltonian cycle (giant tour). \\
    $\mathcal{G}^{s} = (\mathcal{V}^{s}, \mathcal{A}^{s})$ & directed acyclic graph of possible trips. \\
    $t_{\max}$ & maximum running time. \\
    $\hat{t}_{ij}, \hat{v}_{ij}$ & sampled mean and variance of travel time. \\
    $\mu_{ij}, \sigma_{ij}$ & parameters of the lognormal distribution. \\
    $\hat{s}$ & sampled mean service time. \\
    $\lambda$ & parameter of the exponential distribution. \\
    $\hat{p}_{i}$ & estimated probability of cancellation of customer $i \in \mathcal{N}$. \\
    $\gamma_{i}$ & parameter of the Bernoulli distribution. \\
    $\psi_{i}$ & random arrival time to customer $i \in \mathcal{N}$ when visited by route $r \in \Omega$.\\
    $p_{\psi_{i}}(\cdot)$ & probability distribution of $\psi_{i}$. \\
    $\alpha$ & desired probability of on-time arrival. \\
    \hline \hline
    \end{tabular}
    }}
    \end{center}
    \caption{Notation table}
    \label{NotationTable}
    \end{table}%

\section{Method of moments} \label{A:Moments}

\rev{In this section, we explain the procedure to use the method of moments in order to estimate the unknown parameters $\theta_{1}, \theta_{2}, \ldots, \theta_{k}$ that characterize the probability distribution $f_{X}(x;\theta)$ of random variable $X$. Suppose that the first $k$ moments of the distribution can be expressed as
\begin{align*}
    \mu_{1} & \equiv E[X] = g_{1}(\theta_{1}, \theta_{2}, \ldots, \theta_{k}) \\
    \mu_{2} & \equiv E[X^{2}] = g_{2}(\theta_{1}, \theta_{2}, \ldots, \theta_{k}) \\
    & \; \; \vdots \nonumber\\
    \mu_{k} & \equiv E[X^{k}] = g_{k}(\theta_{1}, \theta_{2}, \ldots, \theta_{k})
\end{align*}
Then, using a sample of size $n$, we obtain the values $x_{1}, \ldots, x_{n}$. For $j = 1, \ldots, k$, we estimate $\mu_{j}$ using the $j$-th sample moment given by
\begin{align*}
    \hat{\mu}_{j} = \frac{1}{n}\sum_{i = 1}^{n}x_{i}^{j}
\end{align*}
Therefore, the estimators found by the method of moments --denoted as $\hat{\theta}_{1}, \ldots, \hat{\theta}_{k}$-- are defined by the solution (provided it exists) of the following system of equations
\begin{align*}
    \mu_{1} & = g_{1}(\hat{\theta}_{1}, \hat{\theta}_{2}, \ldots, \hat{\theta}_{k}) \\
    \mu_{2} & = g_{2}(\hat{\theta}_{1}, \hat{\theta}_{2}, \ldots, \hat{\theta}_{k}) \\
    & \; \; \vdots \nonumber\\
    \mu_{k} & = g_{k}(\hat{\theta}_{1}, \hat{\theta}_{2}, \ldots, \hat{\theta}_{k}).
\end{align*}
}
\end{document}